\newcommand\tstrut{\rule{0pt}{2.6ex}}
\newcommand{\A}{\mbox{$\widehat A$}}
\newcommand{\Z}{\mbox{$\mathbb Z$}}
\newcommand{\C}{\mbox{$\mathbb C$}}
\newcommand{\ext}{\raise1pt\hbox{$\textstyle\bigwedge$}}
\newcommand{\sym}{S}
\newcommand{\ind}{\mbox{\rm ind}}
\newcommand{\ch}{\mbox{\rm ch}}
\newcommand{\vol}{\mbox{\rm vol}}
\newcommand{\coker}{\mbox{\rm coker}}
\newcommand{\dirac}{/\kern-5pt\partial}
\newcommand{\End}{{\rm End}}
\numberwithin{equation}{section}
\newtheorem{Theorem}{Theorem}[section]
\newtheorem{Lemma}[Theorem]{Lemma}
 { \theoremstyle{definition}
\newtheorem{Definition}[Theorem]{Definition}
\newtheorem{Example}[Theorem]{Example}
\newtheorem{Remark}[Theorem]{Remark} }
\begin{document}

\allowdisplaybreaks

\newcommand{\arXivNumber}{1609.01509}

\renewcommand{\PaperNumber}{027}

\FirstPageHeading

\ShortArticleName{Rigidity and Vanishing Theorems for Almost Even-Clif\/ford Hermitian Manifolds}

\ArticleName{Rigidity and Vanishing Theorems\\ for Almost Even-Clif\/ford Hermitian Manifolds}

\Author{Ana Lucia GARCIA-PULIDO and Rafael HERRERA}
\AuthorNameForHeading{A.L.~Garcia-Pulido and R.~Herrera}
\Address{Centro de Investigaci\'on en Matem\'aticas, A.~P.~402, Guanajuato, Gto., C.P.~36000, M\'exico}
\Email{\href{mailto:lucia@cimat.mx}{lucia@cimat.mx}, \href{mailto:rherrera@cimat.mx}{rherrera@cimat.mx}}
\URLaddress{\url{https://sites.google.com/site/algarciapulido}}

\ArticleDates{Received October 10, 2016, in f\/inal form April 19, 2017; Published online April 23, 2017}

\Abstract{We prove the rigidity and vanishing of several indices of ``geometrically natural'' twisted Dirac operators on
almost even-Clif\/ford Hermitian manifolds admitting circle actions by automorphisms.}

\Keywords{almost even-Clif\/ford Hermitian manifolds; index of elliptic operator; twisted Dirac operators; circle action by automorphisms}

\Classification{53C10; 53C15; 53C25; 58J20; 57S15}

\section{Introduction}

There are two classical vanishing theorems for the $\A$-genus (the index of the Dirac operator) on Spin manifolds: the Lichnerowicz vanishing \cite{Lichnerowicz} which assumes a metric of positive scalar curvature, and the Atiyah--Hirzebruch vanishing~\cite{AH} which assumes smooth circle action. These vanishings can be seen and have been used frequently as obstructions to the existence of such metrics or actions. More vanishing theorems for the indices of Spin$^c$ Dirac operators were explored by Hattori~\cite{Hattori} on almost complex manifolds and Spin$^c$ manifolds with compatible circle actions, which have parallels on complex manifolds with ample line bundles (a positivity condition for certain curvature) as in the case of the Kodaira vanishing theorem. Vanishing theorems have also been proven for indices of twisted Dirac operators on compact quaternion-K\"ahler manifolds with positive scalar curvature~\cite{LS}, and for almost quaternion-Hermitian manifolds with isometric circle actions that preserve the almost quaternion-Hermitian structure~\cite{Herrera-Herrera}.

The vanishings of such indices on manifolds with isometric circle actions are instances of the rigidity of elliptic operators under such actions,
an important property in the context of elliptic genera~\cite{BT,Dessai-rigidity,Hi-genera, HiSl,Liu, Taubes,W1, W2}. In this paper, we prove the rigidity and vanishing of the indices of several ``geometrically natural'' twisted Dirac operators on almost even-Clif\/ford manifolds admitting circle actions by automorphisms, resembling those studied on almost quaternionic-Hermitian manifolds.

The note is organized as follows. In Section~\ref{sec: preliminaries}, we recall some material on Clif\/ford algebras, Spin groups and representations, maximal tori of classical Lie groups, almost even-Clif\/ford Hermitian manifolds and their structure groups. In Section \ref{sec: twisted spinor bundles}, we examine the weights of the Spin representation in terms of the weights of the aforementioned structure groups and explore which representations to use in the twisted Dirac operators. In Section~\ref{sec: dirac-operators}, we prove the vanishing Theorems~\ref{theo: vanishing 1},~\ref{theo: vanishing 2} and~\ref{theo: vanishing 3}, using the Atiyah--Singer f\/ixed point theorem.

\section{Preliminaries}\label{sec: preliminaries}

The material presented in this section can be consulted in \cite{Arizmendi-Garcia-Herrera, Brocker-tomDieck, Friedrich}.

\subsection{Clif\/ford algebra, spin group and representation}
Let ${\rm Cl}_n$ denote the $2^n$-dimensional real Clif\/ford algebra generated by the orthonormal vectors $e_1, e_2, \dots, e_n\in \mathbb{R}^n$ subject to
the relations
\begin{gather*}e_i e_j + e_j e_i = -2\delta_{ij},\end{gather*}
and $\mathbb{C}{\rm l}_n={\rm Cl}_n\otimes_{\mathbb{R}}\mathbb{C}$ its complexif\/ication. The even Clif\/ford subalgebra ${\rm Cl}_r^0$ is def\/ined as the invariant $(+1)$-subspace of the involution of ${\rm Cl}_r$ induced by the map $-{\rm Id}_{\mathbb{R}^r}$.

There exist algebra isomorphisms
\begin{gather}
\mathbb{C}{\rm l}_n\cong
 \begin{cases}
 \End\big(\mathbb{C}^{2^k}\big) & \text{if $n=2k$},\\
 \End\big(\mathbb{C}^{2^k}\big)\oplus\End\big(\mathbb{C}^{2^k}\big) & \text{if $n=2k+1$},
 \end{cases}\label{eq: isomorphisms Clifford algebras}
\end{gather}
and the space of (complex) spinors is def\/ined to be
\begin{gather*}\Delta_n:=\mathbb{C}^{2^k}=\underbrace{\mathbb{C}^2\otimes \dots \otimes \mathbb{C}^2}_{k\,\,{\rm times}}.\end{gather*}
The map
\begin{gather*}\kappa\colon \ \mathbb{C}{\rm l}_n \longrightarrow \End\big(\mathbb{C}^{2^k}\big)\end{gather*}
is def\/ined to be either the aforementioned isomorphism for $n$ even, or the isomorphism followed by the projection onto the f\/irst summand for~$n$ odd. In order to make $\kappa$ explicit, consider the following matrices
\begin{gather*}{\rm Id} = \left(\begin{matrix}
1 & 0\\
0 & 1
 \end{matrix}\right),\qquad
g_1 = \left(\begin{matrix}
i & 0\\
0 & -i
 \end{matrix}\right),\qquad
g_2 = \left(\begin{matrix}
0 & i\\
i & 0
 \end{matrix}\right),\qquad
T = \left(\begin{matrix}
0 & -i\\
i & 0
 \end{matrix}\right).
\end{gather*}
In terms of the generators $e_1, \dots, e_n$ of the Clif\/ford algebra, $\kappa$ can be described explicitly as follows
\begin{gather}
e_1 \mapsto {\rm Id}\otimes {\rm Id}\otimes \dots\otimes {\rm Id}\otimes {\rm Id}\otimes g_1,\nonumber\\
e_2 \mapsto {\rm Id}\otimes {\rm Id}\otimes \dots\otimes {\rm Id}\otimes {\rm Id}\otimes g_2,\nonumber\\
e_3 \mapsto {\rm Id}\otimes {\rm Id}\otimes \dots\otimes {\rm Id}\otimes g_1\otimes T,\nonumber\\
e_4 \mapsto {\rm Id}\otimes {\rm Id}\otimes \dots\otimes {\rm Id}\otimes g_2\otimes T,\label{eq: explicit isomorphisms Clifford algebras}\\
 \cdots\cdots\cdots\cdots\cdots\cdots\cdots\cdots\cdots\cdots\cdots\nonumber\\
e_{2k-1} \mapsto g_1\otimes T\otimes \dots\otimes T\otimes T\otimes T,\nonumber\\
e_{2k} \mapsto g_2\otimes T\otimes\dots\otimes T\otimes T\otimes T,\nonumber
\end{gather}
and, if $n=2k+1$,
\begin{gather*} e_{2k+1}\mapsto i T\otimes T\otimes\dots\otimes T\otimes T\otimes T.\end{gather*}
The vectors
\begin{gather*}u_{+1}={1\over \sqrt{2}}(1,-i)\qquad\text{and}\qquad u_{-1}={1\over \sqrt{2}}(1,i),\end{gather*}
form a unitary basis of $\mathbb{C}^2$ with respect to the standard Hermitian product. Thus,{\samepage
\begin{gather*}\mathcal{B}=\{u_{\varepsilon_1,\dots,\varepsilon_k}=u_{\varepsilon_1}\otimes\dots\otimes
u_{\varepsilon_k}\,|\, \varepsilon_j=\pm 1,\, j=1,\dots,k\},\end{gather*}
is a unitary basis of $\Delta_n=\mathbb{C}^{2^k}$ with respect to the naturally induced Hermitian product.}

The Spin group ${\rm Spin}(n)\subset {\rm Cl}_n$ is the subset
\begin{gather*}{\rm Spin}(n) =\big\{x_1x_2\cdots x_{2l-1}x_{2l}\,|\,x_j\in\mathbb{R}^n, \,
|x_j|=1,\, l\in\mathbb{N}\big\},\end{gather*}
endowed with the product of the Clif\/ford algebra. It is a Lie group and its Lie algebra is
\begin{gather*}\mathfrak{spin}(n)=\mbox{span}\{e_ie_j\,|\,1\leq i< j \leq n\}.\end{gather*}
The restriction of $\kappa$ to ${\rm Spin}(n)$ def\/ines the Lie group representation
\begin{gather*}\kappa_n:=\kappa|_{{\rm Spin}(n)}\colon \ {\rm Spin}(n)\longrightarrow {\rm GL}(\Delta_n),\end{gather*}
which is, in fact, special unitary. We have the corresponding Lie algebra representation
\begin{gather*}\kappa_{n_*}\colon \ \mathfrak{spin}(n)\longrightarrow \mathfrak{gl}(\Delta_n).\end{gather*}
Recall that the Spin group ${\rm Spin}(n)$ is the universal double cover of ${\rm SO}(n)$, $n\ge 3$. For $n=2$ we consider ${\rm Spin}(2)$ to be the connected double cover of ${\rm SO}(2)$. The covering map will be denoted by
\begin{gather*}\lambda_n\colon \ {\rm Spin}(n)\rightarrow {\rm SO}(n)\subset {\rm GL}\big(\mathbb{R}^n\big).\end{gather*}
Its dif\/ferential is given by $\lambda_{n_*}(e_ie_j) = 2E_{ij}$, where $E_{ij}=e_i^*\otimes e_j - e_j^*\otimes e_i$ is the standard basis of the skew-symmetric matrices, and $e^*$ denotes the metric dual of the vector~$e$. Furthermore, we will abuse the notation and also denote by $\lambda_n$ the induced representation on the exterior algebra~$\ext^*\mathbb{R}^n$.

By means of $\kappa$, we have the Clif\/ford multiplication
\begin{align*}
\mu_n\colon \ \mathbb{R}^n\otimes \Delta_n &\longrightarrow\Delta_n, \\
x \otimes \phi &\longmapsto \mu_n(x\otimes \phi)=x\cdot\phi :=\kappa(x)(\phi) .
\end{align*}
The Clif\/ford multiplication $\mu_n$ is skew-symmetric with respect to the Hermitian product
\begin{gather*}
\langle x\cdot\phi_1 , \phi_2\rangle =\langle \mu_n(x\otimes \phi_1) , \phi_2\rangle =-\langle \phi_1 , \mu_n(x\otimes \phi_2)\rangle =-\langle \phi_1 , x\cdot \phi_2\rangle,
\end{gather*}
is ${\rm Spin}(n)$-equivariant and can be extended to a ${\rm Spin}(n)$-equivariant map
\begin{align*}
\mu_n\colon \ \ext^*\big(\mathbb{R}^n\big)\otimes \Delta_n &\longrightarrow\Delta_n,\\
\omega \otimes \psi &\longmapsto \omega\cdot\psi.
\end{align*}

When $n$ is even, we def\/ine the following involution
\begin{align*}
\Delta_n&\longrightarrow \Delta_n, \\
\psi &\longmapsto (-i)^{n\over 2}{\rm vol}_n\cdot \psi,
\end{align*}
where $\vol_n = e_1\cdots e_n$. The $\pm 1$ eigenspace of this involution is denoted $\Delta_n^\pm$. These spaces have equal dimension and are irreducible representations of ${\rm Spin}(n)$. Note that our def\/inition dif\/fers from the one given in \cite{Friedrich} by a $(-1)^{n\over 2}$. The reason for this dif\/ference is that we want the spinor~$u_{1,\dots,1}$ to be always positive. In this case, we will denote the two representations by
\begin{gather*}
\kappa_n^\pm\colon \ {\rm Spin}(n) \longrightarrow {\rm GL}\big(\Delta_n^\pm\big).
\end{gather*}

For future use, let us recall the ef\/fect of ${\rm vol}_n$ on $\Delta_n= \Delta_n^+ \oplus \Delta_n^-$ when $n$ is even:
\begin{gather*}\begin{array}{|c|c|c|}\hline
n\,\,\text{(mod 8)} & \Delta_n^+ & \Delta_n^- \tstrut\\\hline
0 & 1 & -1\\\hline
2 & i & -i\\\hline
4 & -1 & 1\\\hline
6 & -i & i \\\hline
 \end{array}
\end{gather*}
Furthermore, for $n\equiv 0$ (mod 4), $n\not =4$,
\begin{gather*}\ker(\kappa_n^+)= \begin{cases}
\{1,{\rm vol}_r\} & \text{if $r\equiv 0$ (mod 8)},\\
\{1,-{\rm vol}_r\} & \text{if $r\equiv 4$ (mod 8)},
 \end{cases}\end{gather*}
and
\begin{gather*}\ker(\kappa_n^-)= \begin{cases}
\{1,-{\rm vol}_r\} & \text{if $r\equiv 0$ (mod 8)},\\
\{1,{\rm vol}_r\} & \text{if $r\equiv 4$ (mod 8)}.
 \end{cases}\end{gather*}
For $r$ even, let
\begin{gather*}
\mathbb{P}{\rm SO}(r):= {{\rm SO}(r)\over\{\pm {\rm Id}_r\}}\cong {{\rm Spin}(r)\over\{\pm 1,\pm{\rm vol}_r\}},
\end{gather*}
and for $r\equiv 0$ (mod 4) let
\begin{gather*}
{\rm Spin}_\pm(r) \cong {{\rm Spin}(r)\over \{1, \pm{\rm vol}_r\} }.
\end{gather*}
Note that we will always denote by $1$ and ${\rm Id}_r$ the identity elements of ${\rm Spin}(r)$ and ${\rm SO}(r)$ respectively.

\subsection{Maximal tori}

\subsubsection[${\rm SO}(n)$]{$\boldsymbol{{\rm SO}(n)}$}

Recall that a maximal torus of ${\rm SO}(n)$ is given by
\begin{gather*}
 \left(
\begin{matrix}
\cos(\eta_1) & -\sin(\eta_1) & & & \\
\sin(\eta_1) & \cos(\eta_1) & & & \\
 & & \ddots & & \\
 & & & \cos(\eta_{n/2}) & -\sin(\eta_{n/2}) \\
 & & & \sin(\eta_{n/2}) & \cos(\eta_{n/2})
\end{matrix}
\right)
\end{gather*}
if $n$ is even, and
\begin{gather*}
 \left(
\begin{matrix}
\cos(\eta_1) & -\sin(\eta_1) & & & & \\
\sin(\eta_1) & \cos(\eta_1) & & & & \\
 & & \ddots & & &\\
 & & & \cos(\eta_{[n/2]}) & -\sin(\eta_{[n/2]}) & \\
 & & & \sin(\eta_{[n/2]}) & \cos(\eta_{[n/2]}) & \\
 & & & & & 1
\end{matrix}
\right)
\end{gather*}
if $n$ is odd.

\subsubsection[${\rm Spin}(n)$]{$\boldsymbol{{\rm Spin}(n)}$}
Each one of the $2\times 2$ rotation blocks is a transformation that can be achieved by using Clif\/ford product. For instance the rotation
\begin{gather*}\left(
\begin{matrix}
\cos(\varphi_1) & -\sin(\varphi_1) & & & \\
\sin(\varphi_1) & \cos(\varphi_1) & & & \\
 & & 1 & & \\
 & & & \ddots & \\
 & & & & 1
\end{matrix}
\right)
\end{gather*}
can be achieved by using the element
\begin{gather*}e_1(-\cos(\varphi_1/2)e_1 + \sin(\varphi_1/2)e_2)=\cos(\varphi_1/2) + \sin(\varphi_1/2)e_1e_2 \in {\rm Spin}(n)\end{gather*}
as follows
\begin{gather*}
(\cos(\varphi_1/2) + \sin(\varphi_1/2)e_1e_2)y(\cos(\varphi_1/2) + \sin(\varphi_1/2)e_2e_1) \\
\qquad{} = (y_1\cos(\varphi_1)-y_2\sin(\varphi_1))e_1 + (y_1\sin(\varphi_1)+y_2\cos(\varphi_1))e_2+y_3e_3+\cdots+y_ne_n,
\end{gather*}
for $y=y_1e_1+\cdots+y_ne_n\in\mathbb{R}^n$. Thus, we see that the corresponding elements
in ${\rm Spin}(n)$ are
\begin{gather*}\pm(\cos(\varphi_1/2) + \sin(\varphi_1/2)e_1e_2).\end{gather*}
Furthermore, we see that a maximal torus of ${\rm Spin}(n)$ is given by elements of the form
\begin{gather*}t(\varphi_1,\dots,\varphi_{k})=\prod_{j=1}^{k} (\cos(\varphi_j/2) + \sin(\varphi_j/2)e_{2j-1}e_{2j}).\end{gather*}
By using the explicit description~\eqref{eq: explicit isomorphisms Clifford algebras} of the isomorphisms \eqref{eq: isomorphisms Clifford algebras}, we can check that
\begin{gather*}t(\varphi_1,\dots,\varphi_{k})\cdot u_{\varepsilon_1,\dots,\varepsilon_k} =e^{{i\over 2}{\sum\limits_{j=1}^{k}\varepsilon_{k+1-j}\varphi_j}}\cdot u_{\varepsilon_1,\dots,\varepsilon_k},\end{gather*}
i.e., the basis vectors $u_{\varepsilon_1,\dots,\varepsilon_k}$ are weight vectors of the spin representation with weight
\begin{gather*}{1\over 2}\sum_{j=1}^{k}\varepsilon_{k+1-j}\varphi_j,\end{gather*}
which in coordinate vectors with respect to the basis $\{\varphi_j\}$ give the well known expressions
\begin{gather*}\left(\pm{1\over 2},\pm{1\over 2},\dots,\pm{1\over 2}\right).\end{gather*}
Indeed, in terms of the (appropriately ordered) basis $\mathcal{B}$, the matrix associated to an element $t(\varphi_1,\dots,\varphi_{[{n\over 2}]})$ is
\begin{gather*}
\left(\begin{array}{@{}l@{}}
e^{{i\over 2}(\varphi_1+\varphi_2+\cdots+\varphi_{[{n\over 2}]})} \\
\qquad e^{{i\over 2}(-\varphi_1+\varphi_2+\cdots+\varphi_{[{n\over 2}]})} \\
\qquad\qquad e^{{i\over 2}(\varphi_1-\varphi_2+\cdots+\varphi_{[{n\over 2}]})} \\
\qquad\qquad\qquad \ddots \\
\qquad\qquad\qquad\qquad e^{{i\over 2}(-\varphi_1-\varphi_2+\cdots+\varphi_{[{n\over 2}]})} \\
\qquad\qquad\qquad\qquad\qquad \ddots \\
\qquad\qquad\qquad\qquad\qquad\qquad e^{{i\over 2}(-\varphi_1-\varphi_2-\cdots-\varphi_{[{n\over 2}]})}
\end{array}\right).
\end{gather*}
Note that, when $n$ is even, $\Delta_n^+$ is generated by the basis vectors $u_{\varepsilon_1,\dots,\varepsilon_{{n\over 2}}}$ with an even number
of $\varepsilon_j$ equal to $-1$, and $\Delta_n^-$ is generated by the basis vectors $u_{\varepsilon_1,\dots,\varepsilon_{{n\over 2}}}$ with an odd number of $\varepsilon_j$ equal to $-1$. Therefore, after reordering the basis, the matrix above can be rearranged to have two diagonal blocks of equal size: one block in which the exponents contain an even number of negative signs
\begin{gather*}
 \left(\begin{array}{@{}l@{}}
e^{{i\over 2}(\varphi_1+\varphi_2+\cdots+\varphi_{{n\over 2}})} \\
\qquad e^{{i\over 2}(-\varphi_1-\varphi_2+\cdots+\varphi_{{n\over 2}})} \\
\qquad\qquad e^{{i\over 2}(-\varphi_1+\varphi_2-\cdots+\varphi_{{n\over 2}})} \\
\qquad\qquad\qquad \ddots
 \end{array}\right),
\end{gather*}
and another block in which the exponents contain an odd number of negative signs
\begin{gather*}
\left(\begin{array}{@{}l@{}}
e^{{i\over 2}(-\varphi_1+\varphi_2+\cdots+\varphi_{{n\over 2}})} \\
\qquad e^{{i\over 2}(\varphi_1-\varphi_2+\cdots+\varphi_{{n\over 2}})} \\
\qquad\qquad e^{{i\over 2}(\varphi_1+\varphi_2-\varphi_3+\cdots+\varphi_{{n\over 2}})} \\
\qquad\qquad\qquad \ddots
 \end{array}\right).
\end{gather*}

\subsubsection[${\rm U}(m)$]{$\boldsymbol{{\rm U}(m)}$}

The standard maximal torus of ${\rm U}(m)$ is
\begin{gather*}\left(
\begin{matrix}
e^{i\theta_1} & & & \\
 & e^{i\theta_2} & & \\
 & & \ddots & \\
 & & & e^{i\theta_m}\\
 & & &
\end{matrix}
\right).
\end{gather*}

\subsubsection{${\rm Sp}(m)$}

The standard maximal torus of ${\rm Sp}(m)$ is
\begin{gather*}\left(
\begin{matrix}
e^{i\theta_1} & & & & \\
 & e^{-i\theta_1} & & & \\
 & & \ddots & & \\
 & & & e^{i\theta_m} & \\
 & & & & e^{-i\theta_m}
\end{matrix}
\right).
\end{gather*}

\subsection{Almost even-Clif\/ford Hermitian structures}

\begin{Definition} Let $N\in \mathbb{N}$ and $(e_1,\dots,e_r)$ an orthonormal frame of $\mathbb{R}^r$.
\begin{itemize}\itemsep=0pt
\item A {\em linear even-Clifford structure of rank $r$} on $\mathbb{R}^N$ is an algebra representation
\begin{gather*}\Phi\colon \ {\rm Cl}_r^0\longrightarrow \End\big(\mathbb{R}^N\big).\end{gather*}
\item A {\em linear even-Clifford Hermitian structure of rank~$r$} on $\mathbb{R}^N$ (endowed with a positive def\/inite inner product) is a linear even-Clif\/ford structure of rank~$r$ such that each bivec\-tor~$e_ie_j$, $1\leq i< j \leq r$, is mapped to a skew-symmetric endomorphism $\Phi(e_ie_j)=J_{ij}$.
\end{itemize}
\end{Definition}

\begin{Remark}\quad
\begin{itemize}\itemsep=0pt
\item Note that $J_{ij}^2=-{\rm Id}_{\mathbb{R}^N}$.
\item Given a linear even-Clif\/ford structure of rank $r$ on $\mathbb{R}^N$, we can average the standard inner product $\langle \,,\,\rangle $ on $\mathbb{R}^N$ as follows
\begin{gather*}(X,Y)=\sum_{k=1}^{[r/2]} \bigg[\sum_{1\leq i_1<\dots<i_{2k}<r}
\langle \Phi(e_{i_1\dots i_{2k}})(X),\Phi(e_{i_1\dots i_{2k}})(Y)\rangle \bigg], \end{gather*}
where $(e_1,\dots,e_r)$ is an orthonormal frame of $\mathbb{R}^r$, so that the linear even-Clif\/ford structure is Hermitian with respect to the averaged inner product.
\item Given a linear even-Clif\/ford Hermitian structure of rank $r$, the subalgebra~$\mathfrak{spin}(r)$ is mapped injectively into the skew-symmetric endomorphisms $\End^-(\mathbb{R}^N)$.
\end{itemize}
\end{Remark}

\begin{Definition} Let $r\geq 2$.
\begin{itemize}\itemsep=0pt
 \item A {\em rank $r$ almost even-Clifford structure} on a smooth manifold $M$ is a smoothly varying choice of a rank $r$ linear even-Clif\/ford structure on each tangent space of~$M$.

\item A smooth manifold carrying an almost even-Clif\/ford structure will be called an {\em almost even-Clifford manifold}.

\item A {\em rank $r$ almost even-Clifford Hermitian structure} on a Riemannian manifold~$M$ is \linebreak a~smoothly varying choice of a~linear even-Clif\/ford Hermitian structure on each tangent space of~$M$.

\item A Riemannian manifold carrying a rank $r$ almost even-Clif\/ford Hermitian structure will be called a~{\em rank~$r$ almost even-Clifford Hermitian manifold}, or an {\em almost-${\rm Cl}_r^0$-Hermitian manifold} for short.
\end{itemize}
\end{Definition}

\begin{Remark}
Our def\/inition of almost even-Clif\/ford Hermitian structure does not require the existence of a Riemannian vector bundle of rank $r$. Therefore, it includes both the notions of even Clif\/ford structure and projective even Clif\/ford structure introduced in \cite[Def\/inition~2.2 and Remark~2.5]{Moroianu-Semmelmann}.
\end{Remark}

\subsubsection{Structure groups of almost even-Clif\/ford manifolds}\label{subsubsection: structure groups}

Thanks to \cite{Arizmendi-Herrera}, we know that the complexif\/ication of the tangent space of an almost-${\rm Cl}_r^0$-Hermitian manifold decomposes as follows
\begin{gather}
\begin{array}{|c|c|}
 \hline
 r \mbox{\ {\rm (mod 8)} } &\mathbb{R}^N\otimes \mathbb{C}
\rule{0pt}{3ex}\\
\hline
 0 & \mathbb{C}^{m_1}\otimes \Delta_r^+ \oplus \mathbb{C}^{m_2}\otimes \Delta_r^-\tstrut\\
 \hline
 1,\,7 & \mathbb{C}^m\otimes \Delta_r\tstrut\\
\hline
 2 & \mathbb{C}^m\otimes \Delta_r^+\oplus\overline{\mathbb{C}^m}\otimes\Delta_r^-\tstrut\\
\hline
 6 & \overline{\mathbb{C}^m}\otimes \Delta_r^+\oplus \mathbb{C}^m\otimes\Delta_r^-\tstrut\\
\hline
 3,\,5 & \mathbb{C}^{2m}\otimes \Delta_r\tstrut\\
\hline
 4 & \mathbb{C}^{2m_2}\otimes \Delta_r^+ \oplus \mathbb{C}^{2m_1}\otimes \Delta_r^-\tstrut\\
\hline
 \end{array}\label{eq: complexified R^N}
\end{gather}
where the dif\/ferent $\mathbb{C}^p$ denote the corresponding standard complex representations of the classical Lie groups ${\rm SO}(p)$, ${\rm U}(p)$ or ${\rm Sp}(p)$. Note that the dimension of an almost even-Clif\/ford Hermitian manifold depends of two or three parameters: the rank $r$ of the even-Clif\/ford structure and the multiplicity~$m$ or multiplicities $m_1$, $m_2$.

The structure groups of the aforementioned manifolds, for $r\geq 3$, are given as follows (see~\cite{Arizmendi-Garcia-Herrera}):
\begin{itemize}\itemsep=0pt
 \item For $r\not \equiv 0$ (mod 4)
\begin{center}
\begin{tabular}{|c|c|c|}\hline
\backslashbox{$r$ (mod 8)}{$m$ (mod 2)} & 0 & 1 \rule{0pt}{3ex} \\\hline
1, 7 & ${{\rm SO}(m)\times {\rm Spin}(r)\over \{\pm({\rm Id}_{m},1)\}}$ & ${\rm SO}(m)\times {\rm Spin}(r)$ \tsep{5pt}\bsep{5pt} \\\hline
2, 6 & \multicolumn{2}{c|}{${{\rm U}(m)\times {\rm Spin}(r)\over \{\pm({\rm Id}_m,1),\pm(i{\rm Id}_m,-{\rm vol}_r)\}}$} \tsep{5pt}\bsep{5pt}\\\hline
3, 5 & \multicolumn{2}{c|}{${{\rm Sp}(m)\times {\rm Spin}(r)\over \{\pm({\rm Id}_m,1)\}}$} \tsep{5pt}\bsep{5pt}\\\hline
\end{tabular}
\end{center}

\item
For $r\equiv 0$ (mod 8)
\begin{center}
\scalebox{0.85}{
\begin{tabular}{|c|c|c|c|}\hline
\backslashbox{$m_1$}{$m_2$} & 0 & 0 (mod 2) & 1 (mod 2) \tstrut\\\hline
0 & & ${{\rm SO}(m_2)\times {\rm Spin}(r)\over \{\pm({\rm Id}_{m_2},1),\pm({\rm Id}_{m_2},-{\rm vol}_r)\}}$ & ${{\rm SO}(m_2)\times {\rm Spin}(r)\over \langle ({\rm Id}_{m_2},-{\rm vol}_r)\rangle }$\tsep{5pt}\bsep{5pt} \\\hline
0 (mod 2)& ${{\rm SO}(m_1)\times {\rm Spin}(r)\over \{\pm({\rm Id}_{m_1},1),\pm({\rm Id}_{m_1},{\rm vol}_r)\}}$ & ${{\rm SO}(m_1)\times {\rm SO}(m_2)\times {\rm Spin}(r)\over \{\pm({\rm Id}_{m_1},{\rm Id}_{m_2},1),\pm({\rm Id}_{m_1},-{\rm Id}_{m_2},{\rm vol}_r)\}}$ & ${{\rm SO}(m_1)\times {\rm SO}(m_2)\times {\rm Spin}(r)\over \langle (-{\rm Id}_{m_1},{\rm Id}_{m_2},-{\rm vol}_r)\rangle }$\tsep{5pt}\bsep{5pt} \\\hline
1 (mod 2)& ${{\rm SO}(m_1)\times {\rm Spin}(r)\over \langle ({\rm Id}_{m_1},{\rm vol}_r)\rangle }$ & ${{\rm SO}(m_1)\times {\rm SO}(m_2)\times {\rm Spin}(r)\over \langle ({\rm Id}_{m_1},-{\rm Id}_{m_2},{\rm vol}_r)\rangle }$ & ${\rm SO}(m_1)\times {\rm SO}(m_2)\times {\rm Spin}(r)$ \tsep{5pt}\bsep{5pt}\\\hline
\end{tabular}}
\end{center}

\item
For $r\equiv 4$ (mod 8)
\begin{center}
\scalebox{0.9}{
 \begin{tabular}{|c|c|c|c|}\hline
& $m_1,m_2>0$ & $m_1>0$, $m_2=0$ & $m_1=0$, $m_2>0$ \rule{0pt}{3ex} \\\hline
$r=4$ & \multirow{2}{*}{${{\rm Sp}(m_1)\times {\rm Sp}(m_2)\times {\rm Spin}(r)\over \{\pm({\rm Id}_{2m_1},{\rm Id}_{2m_2},1),\pm({\rm Id}_{2m_1},-{\rm Id}_{2m_2},{\rm vol}_r)\}}$} &
 ${{\rm Sp}(m_1)\times {\rm Spin}(3)\over \{\pm({\rm Id}_{2m_1},1)\}}$ & ${{\rm Sp}(m_2)\times {\rm Spin}(3)\over \{\pm({\rm Id}_{2m_2},1)\}}$ \tsep{5pt}\bsep{5pt} \\\hhline{-~--}
$r>4$ & & ${{\rm Sp}(m_1)\times {\rm Spin}(r)\over \{\pm({\rm Id}_{2m_1},1),\pm({\rm Id}_{2m_1},{\rm vol}_r)\}}$ & ${{\rm Sp}(m_2)\times {\rm Spin}(r)\over \{\pm({\rm Id}_{2m_2},1),\pm({\rm Id}_{2m_2},-{\rm vol}_r)\}}$ \tsep{5pt}\bsep{5pt} \\\hline
\end{tabular}}
\end{center}
\end{itemize}

Note that for $r=2$, the structure group is actually ${\rm U}(m)$.

Since all of these groups are quotients of products $G\times {\rm Spin}(r)$, where $G$ is a (product of) classical Lie group(s), it will be useful to know if they can be mapped to either ${\rm Spin}(r)$, or ${\rm SO}(r)$ or $\mathbb{P}{\rm SO}(r)$. It is easy to see that they map as follows
\begin{itemize}\itemsep=0pt
 \item
 For $r\not \equiv 0$ (mod 4)
\begin{center}
\begin{tabular}{|c|c|c|}\hline
\backslashbox{$r$ (mod 8)}{$m$ (mod 2)} & 0 & 1 \rule{0pt}{3ex} \\\hline
1, 7 & ${\rm SO}(r)$ & ${\rm Spin}(r)$ \tstrut\\\hline
2, 6 & \multicolumn{2}{c|}{$\mathbb{P}{\rm SO}(r)$} \tstrut\\\hline
3, 5 & \multicolumn{2}{c|}{${\rm SO}(r)$} \tstrut\\\hline
\end{tabular}
\end{center}
\item
For $r\equiv 0$ (mod 8)
\begin{center}
\begin{tabular}{|c|c|c|}\hline
\backslashbox{$m_1$}{$m_2$} & 0 (mod 2) & 1 (mod 2) \tstrut\\\hline
0 (mod 2)& $\mathbb{P}{\rm SO}(r)$ & ${\rm Spin}_-(r)$\tstrut\\\hline
1 (mod 2)& ${\rm Spin}_+(r)$ & ${\rm Spin}(r)$ \tstrut\\\hline
\end{tabular}
\end{center}
\item
For $r\equiv 4$ (mod 8){\samepage
\begin{center}
\begin{tabular}{|c|c|c|c|}\hline
\backslashbox{$r\equiv 4$ (mod 8)}{$m_1$, $m_2$} & $m_1,m_2>0$ & $m_1>0$, $m_2=0$ & $m_1=0$, $m_2>0$ \rule{0pt}{3ex} \\\hline
$r=4$ & \multirow{2}{*}{$\mathbb{P}{\rm SO}(r)$} &
 ${\rm SO}(3)$ & ${\rm SO}(3)$ \tstrut\\\hhline{-~--}
$r>4$ & & $\mathbb{P}{\rm SO}(r)$ & $\mathbb{P}{\rm SO}(r)$ \tstrut\\\hline
\end{tabular}
\end{center}}
\end{itemize}
This can be summarized roughly as follows: the structure group of an almost-${\rm Cl}_r^0$-Hermitian manifold of rank $r$ maps to ${\rm SO}(r)$ if $r$ is odd, and maps to $\mathbb{P}{\rm SO}(r)$ if $r$ is even.

For future use, we will establish the notation for the decomposition of the complexif\/ied tangent bundle of an almost-${\rm Cl}_r^0$-Hermitian manifold:
\begin{gather}
\begin{array}{|c|c|}
 \hline
 r \mbox{\ {\rm (mod 8)} } &TM\otimes \mathbb{C}
\rule{0pt}{3ex}\\
\hline
 0 & E_1\otimes \Delta_r^+ \oplus E_2\otimes \Delta_r^-\tstrut\\
 \hline
 1,\, 7 & E\otimes \Delta_r\tstrut\\
\hline
 2 & E\otimes \Delta_r^+\oplus\overline{E}\otimes\Delta_r^-\tstrut\\
\hline
 6 & \overline{E}\otimes \Delta_r^+\oplus E\otimes\Delta_r^-\tstrut\\
\hline
 3,\, 5 & E\otimes \Delta_r\tstrut\\
\hline
 4 & E_2\otimes \Delta_r^+ \oplus E_1\otimes \Delta_r^-\tstrut\\
\hline
 \end{array} \label{eq: complexified tangent space}
\end{gather}
where $E$, $E_1$, $E_2$ are locally def\/ined vector bundles with f\/ibre $\mathbb{C}^p$ which correspond to the standard complex representation
of the dif\/ferent Lie groups mentioned in 
\eqref{eq: complexified R^N}.

\subsection{A useful lemma}

\begin{Lemma}\label{lemma: useful lemma}
Let $x\in \mathbb{C}$ and $k,m\in \mathbb{Z}/2$ such that $k+m\in \mathbb{Z}$. If $|k|<|m|$, then
\begin{gather*}G(z)={z^{k}\over z^{-m}e^{x} - z^{m}e^{-x}}\end{gather*}
 is a rational function on $\mathbb{C}$ and
\begin{gather*}\lim_{z\rightarrow 0 } G(z) = 0 = \lim_{z\rightarrow \infty } G(z).\end{gather*}
\end{Lemma}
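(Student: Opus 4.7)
The plan is to show that after clearing the negative powers of $z$, the function $G$ becomes a \emph{proper} rational function in $z$ (degree of numerator strictly less than degree of denominator) whose denominator does not vanish at $z = 0$; this immediately yields both limits. Note that the hypothesis $|k|<|m|$ forces $m \neq 0$, so the two cases $m > 0$ and $m < 0$ exhaust everything.

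I would first treat the case $m > 0$. Multiplying numerator and denominator by $z^m$ gives
\begin{gather*}
G(z) = \frac{z^{k+m}}{e^x - z^{2m} e^{-x}}.
\end{gather*}
Since $k + m \in \mathbb{Z}$ by hypothesis and $2m \in \mathbb{Z}$ because $m \in \mathbb{Z}/2$, both numerator and denominator are genuine polynomials in $z$. The inequality $|k| < |m| = m$ translates to $-m < k < m$, hence $0 < k + m < 2m$. Thus the numerator has positive integer degree strictly less than $2m$, while the denominator is a polynomial of degree $2m$ with nonzero constant term $e^x$ (and nonzero leading coefficient $-e^{-x}$, so not identically zero). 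Therefore $G$ is a rational function on $\mathbb{C}$; it vanishes at $z = 0$ because $k + m > 0$ and the denominator is $e^x \neq 0$ there, and it vanishes at $z = \infty$ because the denominator has strictly larger degree.

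The case $m < 0$ is symmetric. Multiplying numerator and denominator by $z^{-m}$ (with $-m > 0$) yields
\begin{gather*}
G(z) = \frac{z^{k-m}}{z^{-2m} e^x - e^{-x}}.
\end{gather*}
Here $k - m \in \mathbb{Z}$ (since $k + m \in \mathbb{Z}$ and $2m \in \mathbb{Z}$), and $|k| < |m| = -m$ gives $m < k < -m$, whence $0 < k - m < -2m$. The denominator has degree $-2m$ with nonzero constant term $-e^{-x}$, so the same reasoning applies.

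No step is really an obstacle: the entire content is the algebraic observation that the hypothesis $|k| < |m|$ places $k \pm m$ strictly between $0$ and $\pm 2m$, combined with the trivial fact that a proper rational function whose denominator is nonzero at the origin vanishes both at $0$ and at $\infty$. The only point deserving a moment's care is verifying that the exponents $k \pm m$ are genuine integers even though $k$ and $m$ individually may be half-integers, and this is exactly what the hypothesis $k + m \in \mathbb{Z}$ (together with $2m \in \mathbb{Z}$) guarantees.
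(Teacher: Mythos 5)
Your proof is correct and complete. Note that the paper states Lemma~\ref{lemma: useful lemma} without proof, treating it as an elementary algebraic fact, so there is no proof in the source to compare against; your argument --- clearing the half-integer powers by multiplying through by $z^{|m|}$, using $k+m\in\mathbb{Z}$ and $2m\in\mathbb{Z}$ to land on genuine polynomials, translating $|k|<|m|$ into $0<k\pm m<|2m|$, and then invoking the standard facts that a polynomial with nonzero constant term is nonzero at the origin and that a proper rational function vanishes at infinity --- is exactly the straightforward computation the authors must have had in mind, and it handles both limits correctly in both signs of $m$.
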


\section[Twisted spinor bundles on almost-${\rm Cl}_r^0$-Hermitian manifolds]{Twisted spinor bundles on almost-$\boldsymbol{{\rm Cl}_r^0}$-Hermitian manifolds}\label{sec: twisted spinor bundles}

In this subsection, we present some calculations relevant to the global def\/initon of twisted spinor bundles.

When the structure group of an oriented $N$-dimensional Riemannian manifold reduces to a~proper subgroup $G\subset {\rm SO}(N)$, one can associate vector bundles to the corresponding $G$-principal bundle~$P_G$ by means of the representations of~$G$. If the manifold is Spin, one can ask if there exists a lifting map $\tilde{i}$ making the following diagram commute
\begin{gather*}
\xymatrix{
 & {\rm Spin}(N) \ar[d]^{2:1}\\
G\ar[r]^i \ar[ur]^{\tilde{i}} & {\rm SO}(N)
}
\end{gather*}
in which case, the Spin representation $\Delta_N$ may decompose according to $G$.

Even when such map $\tilde{i}$ does not exist (necessarily $\pi_1(G)\not= \{1\}$), there may be a f\/inite covering space $G'$ of $G=G'/\Gamma$ for which it does, and one can then decompose the Spin representation according to $G'$. We can now check how the elements of the f\/inite subgroup $\Gamma$ act on $\Delta_N$, and at least some of them will act non-trivially, thus conf\/irming that there cannot be a map $\tilde{i}$. By observing this action, we can then consider tensoring $\Delta_N$ with another representation $V$ of $G'$ such that $\Gamma$ now acts trivially on $\Delta_N\otimes V$.

In the context of almost-${\rm Cl}_r^0$-Hermitian manifolds, the structure group embeds into the relevant Spin group \cite[Theorem~4.1]{Arizmendi-Garcia-Herrera}, with the exception of four cases which we will analyze. More precisely, we found that
\begin{itemize}\itemsep=0pt
 \item ${{\rm Sp}(m)\times {\rm Spin}(3)\over \{\pm ({\rm Id}_{2m},1)\}}$ does not embed into ${\rm Spin}(4m)$ if $m$ is odd;
 \item ${{\rm Sp}(m_1)\times {\rm Sp}(m_2)\times {\rm Spin}(4)\over \{\pm ({\rm Id}_{2m_1},{\rm Id}_{2m_2},1),\pm ({\rm Id}_{2m_1},-{\rm Id}_{2m_2},{\rm vol}_4)\}}$ does not embed into ${\rm Spin}(4(m_1+m_2))$ if either~$m_1$ or~$m_2$ (or both) are odd;
 \item ${{\rm U}(m)\times {\rm Spin}(6)\over\{\pm ({\rm Id}_m,1),(i{\rm Id}_m, -{\rm vol}_6)\}}$ does not embed into
 ${\rm Spin}(8m)$ if $m$ is odd;
 \item ${{\rm SO}(m_1)\times {\rm SO}(m_2)\times {\rm Spin}(8)\over \{({\rm Id}_{m_1},{\rm Id}_{m_2},1),({\rm Id}_{m_1},-{\rm Id}_{2m_2},{\rm vol}_8)\}}$ if $m_1+1\equiv m_2\equiv 0$ (mod 2), 
 ${{\rm SO}(m_1)\times {\rm SO}(m_2)\times {\rm Spin}(8)\over \{({\rm Id}_{m_1},{\rm Id}_{m_2},1),(-{\rm Id}_{m_1},{\rm Id}_{2m_2},-{\rm vol}_8)\}}$ if $m_1\equiv m_2+1\equiv 0$ (mod 2)
 do not embed into ${\rm Spin}(8(m_1+m_2))$.
\end{itemize}
However, by the same calculations in \cite{Arizmendi-Garcia-Herrera} we know that there are homomorphisms
\begin{itemize}\itemsep=0pt
 \item ${\rm Sp}(m)\times {\rm Spin}(3)\longrightarrow {\rm Spin}(4m)$;
\item ${\rm Sp}(m_1)\times {\rm Sp}(m_2)\times {\rm Spin}(4)\longrightarrow {\rm Spin}(4(m_1+m_2))$;
\item ${\rm U}(m)\times {\rm Spin}(6)\longrightarrow {\rm Spin}(8m)$;
\item ${\rm SO}(m_1)\times {\rm SO}(m_2)\times {\rm Spin}(8)\longrightarrow {\rm Spin}(8(m_1+m_2))$.
\end{itemize}

In order to analyze this situation and the appropriate twisting bundles for almost-${\rm Cl}_r^0$-Hermitian manifolds in general, we need to set up some notation regarding weights of Lie groups.

\subsection[Weights of ${\rm SO}(N)$ with respect to the structure subgroups]{Weights of $\boldsymbol{{\rm SO}(N)}$ with respect to the structure subgroups}\label{subsec: weights}

We need to rewrite the weights of ${\rm SO}(N)$ in terms of the maximal torus of the relevant structure group. Let $(\eta_1,\dots, \eta_{N/2})$ denote the coordinates of a maximal torus of ${\rm SO}(N)$, and $(\varphi_1,\dots,\varphi_{[{r\over 2}]})$ denote the coordinates of a maximal torus of ${\rm SO}(r)$.

For $r$ odd, let $\lambda_1,\dots,\lambda_{2^{[{r\over 2}]}}$ denote the weights of $\Delta_r$
\begin{gather*}\pm {1\over 2}\varphi_1\pm\cdots\pm {1\over 2}\varphi_{[{r\over 2}]},\end{gather*}
listed in some order such that the f\/irst half of weights have an even number of negative signs, and the second half of weights have an odd number of negative signs.

For $r$ even, let $\lambda_1^\pm,\dots,\lambda_{2^{{r\over 2}-1}}^\pm$ denote the weights of $\Delta_r^\pm$
\begin{gather*}\pm {1\over 2}\varphi_1\pm\cdots\pm {1\over 2}\varphi_{{r\over 2}},\end{gather*}
which have an even and odd number of negative signs respectively. If $r \equiv 0 \mod 4$ we will be considering $\lambda_1^\pm,\dots,\lambda_{2^{{r\over 2}-1}}^\pm$ to be listed in some order so that the f\/irst and second halves are interchanged by ref\/lection (changing all the signs),

\subsubsection[$r\equiv1,\, 7 \mod 8$]{$\boldsymbol{r\equiv1,\,7 \mod 8}$}
Let $(\theta_1,\dots,\theta_{[{m\over2}]})$ denote the coordinates of maximal tori of ${\rm SO}(m)$. Since
\begin{gather*}\mathbb{C}^N=\mathbb{C}^{m}\otimes \Delta_r,\end{gather*}
we can set
\begin{gather*}
 \eta_{(j-1)2^{[{r\over 2}]}+k}= \theta_j + \lambda_k
\end{gather*}
 if $m$ is even, and
\begin{gather*}
 \eta_{(j-1)2^{[{r\over 2}]}+k}= \theta_j + \lambda_k, \qquad
 \eta_{[{m\over2}]2^{[{r\over 2}]}+l}= \lambda_l
\end{gather*}
if $m$ is odd, where $1\leq j\leq [{m\over 2}]$, $1\leq k \leq 2^{[{r\over 2}]}$ and $1\leq l \leq 2^{[{r\over 2}]-1}$ in both cases.

\subsubsection[$r\equiv 2,\,6 \mod 8$]{$\boldsymbol{r\equiv 2,\,6 \mod 8}$}
Let $(\theta_1,\dots,\theta_m)$ denote the coordinates of maximal tori of ${\rm U}(m)$. Since
\begin{gather*}\mathbb{C}^N=
\begin{cases}
\mathbb{C}^{m}\otimes \Delta_r^+\oplus \overline{\mathbb{C}^{m}}\otimes \Delta_r^- & \text{if $r\equiv 2$ (mod 8)},\\
\overline{\mathbb{C}^{m}}\otimes \Delta_r^+\oplus \mathbb{C}^{m}\otimes \Delta_r^- & \text{if $r\equiv 6$ (mod 8)},
\end{cases}\end{gather*}
we can set
\begin{gather}
 \eta_{(j-1)2^{{r\over 2}-1}+k}=
 \begin{cases}
 \theta_j + \lambda_k^+ & \text{if $r\equiv 2$ (mod 8)},\\
 \theta_j + \lambda_k^- & \text{if $r\equiv 6$ (mod 8)},
 \end{cases}\label{eq: pesos r=2,6}
\end{gather}
where $1\leq j\leq m$ and $1\leq k \leq 2^{{r\over 2}-1}$.

\subsubsection[$r\equiv 3,\,5 \mod 8$]{$\boldsymbol{r\equiv 3,\,5 \mod 8}$}
Let $(\theta_1,\dots,\theta_m)$ denote the coordinates of maximal tori of ${\rm Sp}(m)$. Since
\begin{gather*}\mathbb{C}^N=\mathbb{C}^{2m}\otimes \Delta_r,\end{gather*}
we can set
\begin{gather}
 \eta_{(j-1)2^{[{r\over 2}]}+k}= \theta_j + \lambda_k , \label{eq: pesos r=3,5}
\end{gather}
where $1\leq j\leq m$ and $1\leq k \leq 2^{[{r\over 2}]}$.

\subsubsection[$r\equiv 4 \mod 8$]{$\boldsymbol{r\equiv 4 \mod 8}$}
Let $(\theta_1,\dots,\theta_{m_1})$ and $(\theta_1',\dots,\theta_{m_2}')$ denote the coordinates of maximal tori of ${\rm Sp}(m_1)$ and ${\rm Sp}(m_2)$ respectively. Since
\begin{gather*}\mathbb{C}^N=\mathbb{C}^{2m_1}\otimes \Delta_r^+\oplus\mathbb{C}^{2m_2}\otimes \Delta_r^-,\end{gather*}
we can set
\begin{gather}
 \eta_{(j_1-1)2^{{r\over 2}-1}+k}= \theta_{j_1} + \lambda_k^+, \qquad
 \eta_{m_12^{{r\over 2}-1}+(j_2-1)2^{{r\over 2}-1}+k}= \theta_{j_2}' + \lambda_k^- , \label{eq: pesos r=4}
\end{gather}
where $1\leq j_1\leq m_1$, $1\leq j_2\leq m_2$ and $1\leq k \leq 2^{{r\over 2}-1}$.

\subsubsection[$r\equiv 0 \mod 8$]{$\boldsymbol{r\equiv 0 \mod 8}$}
Let $(\theta_1,\dots,\theta_{[{m_1\over2}]})$ and $(\theta_1',\dots,\theta_{[{m_2\over2}]}')$ denote the coordinates of maximal tori of ${\rm SO}(m_1)$ and ${\rm SO}(m_2)$ respectively. Since
\begin{gather*}\mathbb{C}^N=\mathbb{C}^{m_1}\otimes \Delta_r^+\oplus\mathbb{C}^{m_2}\otimes \Delta_r^-,\end{gather*}
we can set
\begin{itemize}\itemsep=0pt
 \item if $m_1$, $m_2$ are even,
 \begin{gather*}
 \eta_{(j_1-1)2^{{r\over 2}-1}+k}= \theta_{j_1} + \lambda_k^+, \qquad
 \eta_{m_12^{{r\over 2}-1}+(j_2-1)2^{{r\over 2}-1}+k}= \theta_{j_2}' + \lambda_k^- ,
\end{gather*}
where $1\leq j_1\leq {m_1\over 2}$, $1\leq j_2\leq {m_2\over 2}$ and $1\leq k \leq 2^{{r\over 2}-1}$;

 \item if $m_1$ is even and $m_2$ is odd,
 \begin{gather}
 \eta_{(j_1-1)2^{{r\over 2}-1}+k}= \theta_{j_1} + \lambda_k^+, \qquad
 \eta_{m_12^{{r\over 2}-1}+(j_2-1)2^{{r\over 2}-1}+k}= \theta_{j_2}' + \lambda_k^-, \nonumber\\
 \eta_{m_12^{{r\over 2}-1}+[{m_2\over2}]2^{{r\over 2}-1}+l}= \lambda_l^- ,\label{eq: pesos r=8 m1 even m2 odd}
\end{gather}
where $1\leq j_1\leq {m_1\over 2}$, $1\leq j_2\leq [{m_2\over 2}]$, $1\leq k \leq 2^{{r\over 2}-1}$ and $1\leq l \leq 2^{{r\over 2}-2}$;

 \item if $m_1$ is odd and $m_2$ is even,
 \begin{gather}
 \eta_{(j_1-1)2^{{r\over 2}-1}+k}= \theta_{j_1} + \lambda_k^+, \qquad
 \eta_{[{m_1\over2}]2^{{r\over 2}-1}+l}= \lambda_l^+ ,\nonumber\\
 \eta_{[{m_1\over2}]2^{{r\over 2}-1}+2^{{r\over 2}-2}+(j_2-1)2^{{r\over 2}-1}+k}= \theta_{j_2}'+ \lambda_k^- ,\label{eq: pesos r=8 m1 odd m2 even}
\end{gather}
where $1\leq j_1\leq [{m_1\over 2}]$, $1\leq j_2\leq {m_2\over 2}$, $1\leq k \leq 2^{{r\over 2}-1}$ and $1\leq l \leq 2^{{r\over 2}-2}$;

 \item if $m_1$, $m_2$ are odd,
 \begin{gather*}
 \eta_{(j_1-1)2^{{r\over 2}-1}+k}= \theta_{j_1} + \lambda_k^+, \qquad \eta_{[{m_1\over2}]2^{{r\over 2}-1}+l}= \lambda_l^+ ,\\
 \eta_{[{m_1\over2}]2^{{r\over 2}-1}+2^{{r\over 2}-2}+(j_2-1)2^{{r\over 2}-1}+k}= \theta_{j_2}'+ \lambda_k^- ,\qquad
 \eta_{[{m_1\over2}]2^{{r\over 2}-1}+2^{{r\over 2}-2}+[{m_2\over 2}]2^{{r\over 2}-1}+l}= \lambda_l^- ,
\end{gather*}
where $1\leq j_1\leq [{m_1\over 2}]$, $1\leq j_2\leq [{m_2\over 2}]$, $1\leq k \leq 2^{{r\over 2}-1}$ and $1\leq l \leq 2^{{r\over 2}-2}$.
\end{itemize}

\subsection[The Spin representation when $r=3,4,6,8$]{The Spin representation when $\boldsymbol{r=3,\,4,\,6,\,8}$}\label{subsec: conditions 1}

The elements of the f\/inite subgroups involved in the structure groups of almost-${\rm Cl}_r^0$-Hermitian manifolds actually belong to maximal tori. Thus we can calculate their ef\/fect on representations in terms of the weights we just described. In this subsection, we examine the cases when the structure group does not embed into ${\rm Spin}(N)$.

\subsubsection[$r=3$]{$\boldsymbol{r=3}$}

Recall \eqref{eq: pesos r=3,5}, which in this case is
\begin{gather*}
 \eta_{2j-1} = \theta_j+{\varphi_1\over 2}, \qquad \eta_{2j}= \theta_j-{\varphi_1\over 2} ,
\end{gather*}
so that the weights of the spin representation are
\begin{gather*}
 \pm {\eta_1\over 2} \pm \cdots \pm {\eta_{2m}\over 2} =
 \sum_{j\in I_1} {\theta_j+{\varphi_1\over 2}\over 2} - \sum_{j\in \bar{I_1}} {\theta_j+{\varphi_1\over 2}\over 2}
 + \sum_{j\in I_2} {\theta_j-{\varphi_1\over 2}\over 2} - \sum_{j\in \bar{I_2}} {\theta_j-{\varphi_1\over 2}\over 2},
\end{gather*}
where $I_1,I_2\subseteq \{1,\dots,m\}$, and $\bar{I_j} = \{1,\dots,m\}-I_j$ denote their complements, $j = 1,2$. The element $(-{\rm Id}_{2m},-1)\in {\rm Sp}(m)\times {\rm Spin}(3)$ corresponds to the parameters
\begin{gather*}
 \theta_j = \pi , \qquad \varphi_1 = 2\pi,
\end{gather*}
for $1\leq j\leq m$, so that such a sum is equal to
\begin{gather*}2|I_1|\pi - m\pi\end{gather*}
and the ef\/fect of $(-{\rm Id}_{2m},-1)$ on each weight line is
\begin{gather*}e^{-im\pi} = (-1)^m.
\end{gather*}
Thus, $(-{\rm Id}_{2m},-1)\in {\rm Sp}(m)\times {\rm Spin}(3)$ acts trivially on $\Delta_{4m}$ if $m$ is even and as multiplication by~$(-1)$ if~$m$ is odd.

Thus, in order to have a twisted Spin representation
\begin{gather*}\Delta_{4m}\otimes \ext^u \mathbb{C}^{2m}\otimes (\Delta_3)^{\otimes s}\end{gather*}
of ${{\rm Sp}(m)\times {\rm Spin}(3)\over \{\pm ({\rm Id}_{2m},1)\}}$, the exponents must satisfy $m+u+s\equiv 0 \,\,\,\mbox{(mod 2)},$ which is a well known fact for almost quaternion-Hermitian manifolds~\cite{SalPitman}.

\subsubsection[$r=4$]{$\boldsymbol{r=4}$}

By \eqref{eq: pesos r=4}, the weights of the spin representation are
\begin{gather*}
 \pm {\eta_1\over 2} \pm \cdots \pm {\eta_{2m_1+2m_2}\over 2}\\
{}= \sum_{j_1\in I_1} {\theta_{j_1}+{\varphi_1+\varphi_2\over 2}\over 2} - \sum_{j_1\in \bar{I_1}} {\theta_{j_1}+{\varphi_1+\varphi_2\over 2}\over 2}
+ \sum_{j_1\in I_2} {\theta_{j_1}+{-\varphi_1-\varphi_2\over 2}\over 2} - \sum_{j_1\in \bar{I_2}} {\theta_{j_1}+{-\varphi_1-\varphi_2\over 2}\over 2} \\
\quad{} + \sum_{j_2\in I_1'} {\theta_{j_2}'+{-\varphi_1+\varphi_2\over 2}\over 2} - \sum_{j_2\in \bar{I_1'}} {\theta_{j_2}'+{-\varphi_1+\varphi_2\over 2}\over 2}
 + \sum_{j_2\in I_2'} {\theta_{j_2}'+{\varphi_1-\varphi_2\over 2}\over 2} - \sum_{j_2\in \bar{I_2'}} {\theta_{j_2}'+{\varphi_1-\varphi_2\over 2}\over 2} ,
\end{gather*}
where $I_1,I_2\subseteq \{1,\dots,m_1\}$ and $I_1',I_2'\subseteq \{1,\dots,m_2\}$.
\begin{itemize}\itemsep=0pt
\item The element $(-{\rm Id}_{2m_1},-{\rm Id}_{2m_2},-1)\in {\rm Sp}(m_1)\times {\rm Sp}(m_2)\times {\rm Spin}(4)$ corresponds to the parameters
\begin{gather*}
 \theta_{j_1} = \pi , \qquad \theta_{j_2}' = \pi , \qquad \varphi_1 = 2\pi,\qquad \varphi_2 = 0,
\end{gather*}
so that such a sum is equal to
\begin{gather*}\pi(2|I_1| - m_1 + 2|I_2'|-m_2) \end{gather*}
and the ef\/fect of $(-{\rm Id}_{2m_1},-{\rm Id}_{2m_2},-1)$ on each weight line is
\begin{gather*}e^{-i\pi(m_1+m_2)} = (-1)^{m_1+m_2}.
\end{gather*}

\item The element $({\rm Id}_{2m_1},-{\rm Id}_{2m_2},{\rm vol}_4)\in {\rm Sp}(m_1)\times {\rm Sp}(m_2)\times {\rm Spin}(4)$ corresponds to the parameters
\begin{gather*}
 \theta_{j_1} = 0 , \qquad \theta_{j_2}' = \pi , \qquad \varphi_1 = \pi,\qquad \varphi_2 = \pi,
\end{gather*}
so that the ef\/fect of $({\rm Id}_{2m_1},-{\rm Id}_{2m_2},{\rm vol}_4)$ on each weight line is
\begin{gather*}e^{-i\pi m_2} = (-1)^{m_2}.
\end{gather*}

\item The element $(-{\rm Id}_{2m_1},{\rm Id}_{2m_2},-{\rm vol}_4)\in {\rm Sp}(m_1)\times {\rm Sp}(m_2)\times {\rm Spin}(4)$ corresponds to the parameters
\begin{gather*}
 \theta_{j_1} = \pi , \qquad \theta_{j_2}' = 0 , \qquad \varphi_1 = \pi,\qquad
 \varphi_2 = -\pi,
\end{gather*}
so that the ef\/fect of $(-{\rm Id}_{2m_1},{\rm Id}_{2m_2},-{\rm vol}_4)$ on each weight line is
\begin{gather*}e^{-i\pi m_1} = (-1)^{m_1}.
\end{gather*}
\end{itemize}

Thus, in order to have a twisted Spin representation
\begin{gather*}\Delta_{4(m_1+m_2)}\otimes \ext^{u_1} \mathbb{C}^{2m_1}\otimes \ext^{u_2} \mathbb{C}^{2m_2}\otimes (\Delta_4^+)^{\otimes s}\otimes (\Delta_4^-)^{\otimes t}\end{gather*}
of \begin{gather*}{{\rm Sp}(m_1)\times {\rm Sp}(m_2)\times {\rm Spin}(4)\over \{\pm ({\rm Id}_{2m_1},{\rm Id}_{2m_2},1),\pm ({\rm Id}_{2m_1},-{\rm Id}_{2m_2},{\rm vol}_4)\}},\end{gather*} the exponents must satisfy
\begin{gather*}
m_1+u_1+t \equiv 0\,\,\,\mbox{(mod 2)} ,\qquad m_2+u_2+s \equiv 0\,\,\,\mbox{(mod 2)} .
\end{gather*}

\subsubsection[$r=6$]{$\boldsymbol{r=6}$}

By \eqref{eq: pesos r=2,6}, the weights of the spin representation are
\begin{gather*}
 \pm {\eta_1\over 2} \pm \cdots \pm {\eta_{8m}\over 2}\\
 {} = \sum_{j\in I_1} {\theta_{j}+{-\varphi_1+\varphi_2+\varphi_3\over 2}\over 2} - \sum_{j\in \bar{I_1}} {\theta_{j}+{-\varphi_1+\varphi_2+\varphi_3\over 2}\over 2}
 + \sum_{j\in I_2} {\theta_{j}+{\varphi_1-\varphi_2+\varphi_3\over 2}\over 2} - \sum_{j\in \bar{I_2}} {\theta_{j}+{\varphi_1-\varphi_2+\varphi_3\over 2}\over 2} \\
{} + \sum_{j\in I_3} {\theta_{j}+{\varphi_1+\varphi_2-\varphi_3\over 2}\over 2} - \sum_{j\in \bar{I_3}} {\theta_{j}+{\varphi_1+\varphi_2-\varphi_3\over 2}\over 2}
 + \sum_{j\in I_4} {\theta_{j}+{-\varphi_1-\varphi_2-\varphi_3\over 2}\over 2} - \sum_{j\in \bar{I_4}} {\theta_{j}+{-\varphi_1-\varphi_2-\varphi_3\over 2}\over 2} ,
\end{gather*}
where $I_1,I_2, I_3,I_4\subseteq \{1,\dots,m_1\}$.

\begin{itemize}\itemsep=0pt
\item The element $(-{\rm Id}_{m},-1)\in {\rm U}(m)\times {\rm Spin}(6)$ corresponds to the parameters
\begin{gather*}
 \theta_{j} = \pi , \qquad \varphi_1 = 2\pi,\qquad \varphi_2 = 0,\qquad \varphi_3 = 0,
\end{gather*}
so that its ef\/fect on each weight line is
\begin{gather*}e^{-2i\pi m} = 1.
\end{gather*}

\item The element $(i{\rm Id}_m,-{\rm vol}_6)\in {\rm U}(m)\times {\rm Spin}(6)$ corresponds to the parameters
\begin{gather*}
 \theta_{j} = {\pi\over 2} , \qquad \varphi_1 = -\pi,\qquad \varphi_2 = \pi, \qquad \varphi_3 = \pi,
\end{gather*}
so that its ef\/fect on each weight line is
\begin{gather*}e^{-i\pi m} = (-1)^{m}.
\end{gather*}

\end{itemize}
Thus, in order to have a twisted Spin representation
\begin{gather*}\Delta_{8m}\otimes \ext^{u_1} \mathbb{C}^{m}\otimes \ext^{u_2} \overline{\mathbb{C}^{m}}\otimes (\Delta_6^+)^{\otimes s}\otimes (\Delta_6^-)^{\otimes t}\end{gather*}
of ${{\rm U}(m) \times {\rm Spin}(6)\over \{\pm ({\rm Id}_{m},1),\pm (i{\rm Id}_{m},-{\rm vol}_6)\}}$, the exponents must satisfy
\begin{gather*}
 u_1+u_2+s+t \equiv 0 \,\,\,\mbox{(mod 2)},\qquad
 2m+u_1+3u_2+s+3t \equiv 0 \,\,\,\mbox{(mod 4)},\\
 2m+3u_1+u_2+3s+t \equiv 0 \,\,\,\mbox{(mod 4)}.
\end{gather*}

\subsubsection[$r=8$]{$\boldsymbol{r=8}$}
By \eqref{eq: pesos r=8 m1 odd m2 even}, if $m_1+1\equiv m_2\equiv 0$ (mod 2), the element $({\rm Id}_{m_1},-{\rm Id}_{m_2},{\rm vol}_8)\in {\rm SO}(m_1)\times {\rm SO}(m_2)\times {\rm Spin}(8)$ corresponds to the parameters
\begin{gather*}
 \theta_{j_1} = 0 , \qquad \theta_{j_2}' = \pi , \qquad \varphi_1 = \varphi_2 = \varphi_3 = \varphi_4 = \pi,
\end{gather*}
and its ef\/fect on each weight line is mutiplication by $-1$. Thus, we can have twisted Spin representations
\begin{gather*}\Delta_{8(m_1+m_2)}\otimes \ext^{u_1} \mathbb{C}^{m_1}\otimes \ext^{u_2} \mathbb{C}^{m_2}\otimes(\Delta_8^+)^{\otimes s}\otimes (\Delta_8^-)^{\otimes t}\end{gather*}
of{\samepage
\begin{gather*}
{{\rm SO}(m_1)\times {\rm SO}(m_2)\times {\rm Spin}(8)\over \{({\rm Id}_{m_1},{\rm Id}_{m_2},1),({\rm Id}_{m_1},-{\rm Id}_{m_2},{\rm vol}_8)\}}\end{gather*} if $u_2+t\equiv 1\,\,\,\mbox{(mod 2)}$ and $u_1,s\in \mathbb{N}$.}

Similarly, by \eqref{eq: pesos r=8 m1 even m2 odd}, if $m_1\equiv m_2+1\equiv 0$ (mod 2), we can have twisted Spin representations
\begin{gather*}\Delta_{8(m_1+m_2)}\otimes \ext^{u_1} \mathbb{C}^{m_1}\otimes \ext^{u_2} \mathbb{C}^{m_2}\otimes(\Delta_8^+)^{\otimes s}\otimes (\Delta_8^-)^{\otimes t} \end{gather*}
of
\begin{gather*}
{{\rm SO}(m_1)\times {\rm SO}(m_2)\times {\rm Spin}(8)\over \{({\rm Id}_{m_1},{\rm Id}_{m_2},1),(-{\rm Id}_{m_1},{\rm Id}_{m_2},-{\rm vol}_8)\}}
\end{gather*} if
$u_1+s\equiv 1\,\,\,\mbox{(mod 2)}$ and $u_2,t\in \mathbb{N}.$

\subsection{Twisting representations}\label{subsec: conditions 2}

For most $r$, almost-${\rm Cl}_r^0$-Hermitian manifolds are Spin \cite[Theorem~4.1]{Arizmendi-Garcia-Herrera}. In particular, this is the case when $r\geq 5$ and $r\not= 6,\,8$. Thus, we only need to choose suitable representations of the structure group $G$ to twist the spinor bundle:
\begin{itemize}\itemsep=0pt
 \item For $r\equiv 1,\,7$ (mod 8) our candidates are
 \begin{gather*}\ext^u \mathbb{C}^m\otimes \Delta_r^{\otimes s}, \qquad \sym^u\mathbb{C}^m\otimes \Delta_r^{\otimes s}.\end{gather*}
 They are representations of the structure group when
 \begin{gather*}\begin{array}{@{}ll}
u+s\equiv 0 \,\,\mbox{(mod 2)}& \text{if $m$ is even}, \\
u,s\in\mathbb{N} & \text{if $m$ is odd}.
 \end{array}
 \end{gather*}

 \item For $r\equiv 3,\,5$ (mod 8) our candidates are
 \begin{gather*}\ext^u \mathbb{C}^{2m}\otimes \Delta_r^{\otimes s},\qquad \sym^u\mathbb{C}^{2m}\otimes \Delta_r^{\otimes s}.\end{gather*}
 They are representations of the structure group when
\begin{gather*}u+s\equiv 0 \,\,\mbox{(mod 2)}.\end{gather*}

 \item For $r\equiv 4$ (mod 8) our candidates are
 \begin{gather*}\ext^{u_1} \mathbb{C}^{2m_1}\otimes \ext^{u_2} \mathbb{C}^{2m_2}\otimes (\Delta_r^+)^{\otimes s}\otimes (\Delta_r^-)^{\otimes t},
\\ \sym^{u_1}\mathbb{C}^{2m_1}\otimes \sym^{u_2}\mathbb{C}^{2m_2}\otimes (\Delta_r^+)^{\otimes s}\otimes (\Delta_r^-)^{\otimes t}.\end{gather*}
 They are representations of the structure group when
 \begin{gather*}
u_2+s \equiv 0 \,\,\,\mbox{(mod 2)},\qquad
u_1+t \equiv 0 \,\,\,\mbox{(mod 2)}.
 \end{gather*}

 \item For $r\equiv 2$ (mod 8), $r\not =2$, our candidates are
 \begin{gather*}\ext^{u_1} \mathbb{C}^{m}\otimes \ext^{u_2} \overline{\mathbb{C}^{m}}\otimes (\Delta_r^+)^{\otimes s}\otimes (\Delta_r^-)^{\otimes t},
 \qquad \sym^{u_1}\mathbb{C}^{m}\otimes \sym^{u_2}\overline{\mathbb{C}^{m}}\otimes (\Delta_r^+)^{\otimes s}\otimes (\Delta_r^-)^{\otimes t}.\end{gather*}
 They are representations of the structure group when
 \begin{gather*}
u_1+u_2+s+t \equiv 0 \,\,\,\mbox{(mod 2)},\qquad u_1+3u_2+3s+t \equiv 0 \,\,\,\mbox{(mod 4)},\\
3u_1+u_2+s+3t \equiv 0 \,\,\,\mbox{(mod 4)}.
 \end{gather*}

 \item For $r\equiv 6$ (mod 8) our candidates are
 \begin{gather*}\ext^{u_1} \mathbb{C}^{m}\otimes \ext^{u_2} \overline{\mathbb{C}^{m}}\otimes (\Delta_r^+)^{\otimes s}\otimes (\Delta_r^-)^{\otimes t},
 \qquad \sym^{u_1}\mathbb{C}^{m}\otimes \sym^{u_2}\overline{\mathbb{C}^{m}}\otimes (\Delta_r^+)^{\otimes s}\otimes (\Delta_r^-)^{\otimes t}.\end{gather*}
 They are representations of the structure group when
 \begin{gather*}
u_1+u_2+s+t \equiv 0 \,\,\,\mbox{(mod 2)},\qquad u_1+3u_2+s+3t \equiv 0 \,\,\,\mbox{(mod 4)},\\
3u_1+u_2+3s+t \equiv 0 \,\,\,\mbox{(mod 4)}.
 \end{gather*}

 \item For $r\equiv 0$ (mod 8) our candidates are
 \begin{gather*}\ext^{u_1} \mathbb{C}^{m_1}\otimes \ext^{u_2} \mathbb{C}^{m_2}\otimes (\Delta_r^+)^{\otimes s}\otimes (\Delta_r^-)^{\otimes t},\\
 \sym^{u_1}\mathbb{C}^{m_1}\otimes \sym^{u_2}\mathbb{C}^{m_2}\otimes (\Delta_r^+)^{\otimes s}\otimes (\Delta_r^-)^{\otimes t}.\end{gather*}
 They are representations of the structure group when
 \begin{alignat*}{3}
 & \begin{cases}
u_2+t\equiv 0 \,\,\,\mbox{(mod 2)} \\
u_1+s\equiv 0 \,\,\,\mbox{(mod 2)}
 \end{cases}
 \quad && \mbox{if $m_1\equiv m_2\equiv 0$ (mod 2)},& \\
& \begin{cases}
u_2+t\equiv 0 \,\,\,\mbox{(mod 2)} \\
u_1,s\in\mathbb{N}
 \end{cases}
 \quad & & \mbox{if $m_1+1\equiv m_2\equiv 0$ (mod 2)},& \\
& \begin{cases}
u_2,t\in\mathbb{N} \\
u_1+s\equiv 0 \,\,\,\mbox{(mod 2)}
 \end{cases}
 \quad && \mbox{if $m_1\equiv m_2+1\equiv 0$ (mod 2)},& \\
& u_1,u_2,s,t\in\mathbb{N}\quad && \mbox{if $m_1\equiv m_2\equiv 0$ (mod 2)}. &
 \end{alignat*}
\end{itemize}

\section{Index calculations}\label{sec: dirac-operators}

In this section, we recall the def\/inition of twisted Dirac operators, how to apply the Atiyah--Singer f\/ixed point formula~\cite{AS3}, (inf\/initesimal) automorphisms of almost-${\rm Cl}_r^0$-Hermitian manifolds and prove the vanishing Theorems~\ref{theo: vanishing 1},~\ref{theo: vanishing 2} and~\ref{theo: vanishing 3}.

\subsection{Rigidity of elliptic operators}

\begin{Definition}
Let $D\colon \Gamma(E) \longrightarrow \Gamma(F)$ be an elliptic operator acting on sections of the vector bundles $E$ and $F$ over a compact manifold $M$. The index of $D$ is the virtual vector space $\ind (D)=\ker(D)-\coker(D)$. If $M$ admits a circle action preserving $D$, i.e., such that $S^1$ acts on $E$ and $F$, and commutes with $D$, $\ind(D)$ admits a~Fourier decomposition into complex $1$-di\-men\-sional irreducible representations of $S^1$ $\ind(D) =\sum a_m L^m$, where $a_m\in \Z$ and $L^m$ is the representation of $S^1$ on $\C$ given by $\lambda \mapsto \lambda^m$. The elliptic operator~$D$ is called {\em rigid} if $a_m=0$ for all $m\neq 0$, i.e., $\ind(D)$ consists only of the trivial representation with multiplicity~$a_0$.
\end{Definition}

Let us recall three examples.

\begin{Example}
The deRham complex
\begin{gather*} d+d^* \colon \ \Omega^{\rm even} \longrightarrow \Omega^{\rm odd}\end{gather*}
from even-dimensional forms to odd-dimensional ones, where $d^*$ denotes the adjoint of the exterior derivative $d$, is {\em rigid} for any circle action on~$M$ by isometries since by Hodge theory the kernel and the cokernel of this operator consist of harmonic forms, which by homotopy invariance stay f\/ixed under the circle action.
\end{Example}

\begin{Example} The signature operator on an oriented manifold
\begin{gather*} d_s \colon \ \Omega_c^{+} \longrightarrow \Omega_c^{-}\end{gather*}
from even to odd complex forms under the Hodge $*$ operator is {\em rigid} for any circle action on~$M$ by isometries since the kernel and cokernel of this operator consist of harmonic forms.
\end{Example}

\begin{Example} The Dirac operator on a Spin manifold is {\em rigid} for any circle action by iso\-met\-ries~\cite{AH}.
\end{Example}

\subsection{Twisted Dirac operators}

In this subsection, let $M$ be a $4n$-dimensional oriented Riemannian manifold. $M$ is Spin if its orthonormal frame bundle $P_{{\rm SO}(4n)}$ admits a~double cover by a principal bundle $P_{{\rm Spin}(4n)}$ with structure group ${\rm Spin}(4n)$, which gives rise to the spinor bundle
\begin{gather*}P_{{\rm Spin}(4n)}\times_{\kappa}\Delta_{4n}. \end{gather*}
The Levi-Civita connection on $P_{{\rm SO}(4n)}$ can be lifted to $P_{{\rm Spin}(4n)}$ to def\/ine a covariant dif\/ferentia\-tion~$\nabla$ on~$\Delta$
\begin{gather*}\nabla\colon \ \Gamma (\Delta ) \longrightarrow \Gamma (T^*\otimes \Delta),\end{gather*}
and the (elliptic and self-adjoint) Dirac operator
\begin{gather*}\dirac (\psi) = \sum_{i=1}^{4n} e^i\cdot \nabla_{e_i}\psi\end{gather*}
for $\psi\in\Gamma(\Delta)$, where $(e_1,\dots, e_{4n})$ is a local orthonormal frame. Since the spin representation decomposes, the Dirac operator can be split into two parts
\begin{gather*}\dirac\colon \ \Gamma (\Delta_+ ) \longrightarrow \Gamma ( \Delta_-),\qquad
\dirac^*\colon \ \Gamma (\Delta_- ) \longrightarrow \Gamma ( \Delta_+).\end{gather*}

We are interested in Dirac operators with coef\/f\/icients in auxiliary vector bundles $F$ equipped with a~covariant derivative $\nabla^F\colon \Gamma(F)\longrightarrow\Gamma(T^*\otimes F)$. The Dirac operator twisted by $F$ (or with coef\/f\/icients
in $F$)
\begin{gather*}(\dirac\otimes F)\colon \ \Gamma (\Delta_+\otimes F) \longrightarrow \Gamma (\Delta_-\otimes F)\end{gather*}
is def\/ined by
\begin{gather*}(\dirac\otimes F) (\psi\otimes f) = \left(\sum_{i=1}^{4n} e^i\cdot \nabla_{e_i}\psi\right)\otimes f + \sum_{i=0}^{4n}
\mu\big(e^i\otimes \psi\big) \otimes \nabla_{e_i}^F f,\end{gather*}
where $\psi\in\Gamma(\Delta)$, $f\in\Gamma(F)$.

\begin{Remark}
If the manifold is not Spin, there may exist well def\/ined twisted spinor bundles (as above), as it happens when the structure group of~$M$ reduces to a subgroup of ${\rm SO}(4n)$ and $\Delta_{4n}\otimes F$ is a~representation of such subgroup.
\end{Remark}

\subsection{Index formula and localization}\label{subsec: localization}

Let $M$ be an compact $4n$-dimensional oriented Riemannian manifold. Let us assume that the bundle
\begin{gather*}\Delta_{4n}\otimes F\end{gather*}
is well def\/ined, where we will use the same symbol to denote the representation and the associated vector bundle, where the $\dim_{\mathbb{C}}(F)=p$. Since $\Delta_{4n}\otimes F$ is a Clif\/ford bundle, by the Atiyah--Singer index theorem \cite{Berline,Roe}, the index of the twisted Dirac operators can be computed as
\begin{gather*}\ind(\dirac\otimes F) = \big\langle \A(M) \ch(F),[M]\big\rangle,\end{gather*}
where $\ch(\cdot)$ denotes the Chern character, $\A(M)$ denotes the \A-genus, and $[M]$ denotes the fundamental cycle of $M$. In terms of formal roots,
\begin{gather*}
c(TM\otimes\mathbb{C})= (1+\eta_1)(1-\eta_1)\cdots (1+\eta_{2n})(1-\eta_{2n}),\\
 p(TM)= \big(1+\eta_1^2\big)\cdots \big(1+\eta_{2n}^2\big),\\
 c(F) = (1+\nu_1)\cdots (1+\nu_{p}),\\
 \ch(F)= \sum_{l=1} e^{\nu_l}, \\
 \ind(\dirac\otimes F) = \left\langle \sum_{l=1} e^{\nu_l} \cdot \prod_{i=1}^{2n}
{ \eta_i \over e^{\eta_i\over2}-e^{-{\eta_i\over2}}},[M]\right\rangle .
\end{gather*}
If $M$ admits a non-trivial $S^1$ action that lifts to $\Delta_{4n}\otimes F$, the equivariant version of the index can be written in terms of the local data of the $S^1$-f\/ixed point set~$M^{S^1}$. More precisely, let $z\in S^1$ be a generic element of $S^1$. By the Atiyah--Singer f\/ixed point theorem~\cite{AS3,Berline}
\begin{gather*}\ind(\dirac\otimes F)_{z}= \sum_{P\subset M^{S^1}} \mu(P,z),\end{gather*}
where $\mu(P,z)$ is the local contribution of the oriented f\/ixed point submanifold $P\subset M^{S^1}$, which can be computed as follows. The $S^1$ action on $M$ induces a~decomposition of $TM$ over~$P$,
\begin{gather}
TM\vert_{P}= \sum_{k} N_k,\label{eq: decomposition1}
\end{gather}
where $N_k$ is a bundle over $P$ whose f\/ibers are representations of $S^1$ on which $z\in S^1$ acts as an automorphism with multiple eigenvalue~$z^k$, $k\in\mathbb{Z}$. Note that $P$ inherits an orientation since~$M$ is oriented and the bundles $N_k$ for $k\not=0$ are naturally oriented. Formally, by means of the splitting principle, we can write
\begin{gather}
 TM\vert_{P}= \mathcal{L}^{q_1}+\dots + \mathcal{L}^{q_{2n}},\label{eq: decomposition2}
\end{gather}
where $\mathcal{L}$ corresponds to the standard representation of $S^1$ on $\mathbb{C}$, so that $z\in S^1$ acts by multiplication by $z^{q_i}$ on~$\mathcal{L}^{q_i}$. The integers $q_i=q_i(P)\in \mathbb{Z}$ are the exponents of the action at~$P$, which correspond to the aforementioned numbers~$k$. Thus, following \cite[p.~67]{Hirzebruch},
\begin{gather*}\mu(P,z)= \left\langle \sum z^{-n_k}e^{\nu_k} \prod_{q_i=0}
{ \eta_i \over e^{\eta_i\over2}-e^{-{\eta_i\over2}}} \prod_{q_j\not = 0}
{1 \over z^{-{q_j\over2}}e^{\eta_j\over2}-z^{q_j\over2}e^{-{\eta_j\over2}}},[P]\right\rangle ,
\end{gather*}
where $n_k=n_k(P)$ are the exponents of the action on $F$ restricted to $P$. The function $\mu(P,z)$ is a rational function of the complex variable $z$ with zeroes at $0$ and $\infty$ as long as
\begin{gather}
|n_k|< {1\over 2}\left(|q_1(P)|+\dots + |q_{2n}(P)|\right)\label{eq: fundamental inequality}
\end{gather}
for all $1\leq k\leq p$. If such a condition is fulf\/illed for all $P\subset M^{S^1}$, then $\ind(\dirac\otimes F)_z$ is a rational function of $z$ with zeroes at $0$ and $\infty$. Notice that $\ind(\dirac\otimes F)_z$ also belongs to the representation ring $R(S^1)$ of $S^1$, which can be identif\/ied with the Laurent polynomial ring $\mathbb{Z}[z, z^{-1}]$. Hence, by Lemma~\ref{lemma: useful lemma},
\begin{gather*}\ind(\dirac\otimes F)=\ind(\dirac\otimes F)_1 =0,\end{gather*}
i.e.,
\begin{gather*}\big\langle \A(M)\ch(F),[M]\big\rangle =0.\end{gather*}

\subsection[$\widehat A$-genus of almost-${\rm Cl}_r^0$-Hermitian manifolds]{$\boldsymbol{\widehat A}$-genus of almost-$\boldsymbol{{\rm Cl}_r^0}$-Hermitian manifolds}

Given a $4n$-dimensional Riemannian manifold, according to the splitting principle with respect to the maxinal torus of ${\rm SO}(4n)$, its complexif\/ied tangent bundle splits formally as follows
\begin{gather*}TM\otimes \mathbb{C} = L_1\oplus L_1^{-1} \oplus\cdots\oplus L_{2n}\oplus L_{2n}^{-1}\end{gather*}
and, therefore,
\begin{gather*}
c(TM\otimes \mathbb{C}) = (1+x_1)(1-x_1)\cdots(1+x_{2n})(1-x_{2n})= \big(1-x_1^2\big)\cdots\big(1-x_{2n}^2\big)
\end{gather*}
and its Pontrjagin class is
\begin{gather*}
p(TM) = \big(1+x_1^2\big)\cdots \big(1+x_{2n}^2\big),
\end{gather*}
and the \A-genus is given by
\begin{gather*}
\A(M) =\prod_{j=1}^{2n} {x_i/2\over \sinh(x_i/2)} =\prod_{j=1}^{2n} {x_i\over e^{x_i/2}-e^{-x_i/2}}.
\end{gather*}

In the following, we will set $x_i=\eta_i$ from Section \ref{subsec: weights}.

\subsubsection[$r\equiv 1,\,7 \,\,{\rm (mod\,\,\,8)}$]{$\boldsymbol{r\equiv 1,\,7 \,\,{\rm (mod\,\,\,8)}}$}

The \A-genus is given by
\begin{gather*}
\A(M)
=\prod_{j=1}^{[{m\over 2}]}\prod_{k=1}^{2^{[{r\over 2}]}}{\theta_j + \lambda_k\over e^{\theta_j + \lambda_k\over 2}
-e^{-{\theta_j + \lambda_k\over 2}}}
\end{gather*}
if $m$ is even, and
\begin{gather*}
\A(M)
=\prod_{j=1}^{[{m\over 2}]}\prod_{k=1}^{2^{[{r\over 2}]}}{\theta_j + \lambda_k\over e^{\theta_j + \lambda_k\over 2}
-e^{-{\theta_j + \lambda_k\over 2}}}
\prod_{l=1}^{2^{[{r\over 2}]-1}} {\lambda_l\over e^{\lambda_l\over 2}
-e^{-{\lambda_l\over 2}}}
\end{gather*}
if $m$ is odd.

\subsubsection[$r\equiv 2,\,6 \,\,{\rm (mod\,\,\,8)}$]{$\boldsymbol{r\equiv 2,\,6 \,\,{\rm (mod\,\,\,8)}}$}

The \A-genus is given by
\begin{gather*}
\A(M)
= \begin{cases}
\displaystyle \prod _{j=1}^{m}\prod_{k=1}^{2^{{r\over 2}-1}}{\theta_j + \lambda_k^+\over e^{\theta_j + \lambda_k^+\over 2}
-e^{-{\theta_j + \lambda_k^+\over 2}}} & \mbox{if $r\equiv 2$ (mod 8)},\\
\displaystyle \prod_{j=1}^{m}\prod_{k=1}^{2^{{r\over 2}-1}}{\theta_j + \lambda_k^-\over e^{\theta_j + \lambda_k^-\over 2}
-e^{-{\theta_j + \lambda_k^-\over 2}}} & \mbox{if $r\equiv 6$ (mod 8)}.
\end{cases}
\end{gather*}

\subsubsection[$r\equiv3,\,5 \,\,{\rm (mod\,\,\,8)}$]{$\boldsymbol{r\equiv3,\,5 \,\,{\rm (mod\,\,\,8)}}$}

The \A-genus is given by
\begin{gather*}
\A(M) =\prod_{j=}^{m}\prod_{k=1}^{2^{[{r\over 2}]}}{\theta_j + \lambda_k\over e^{\theta_j + \lambda_k\over 2}
-e^{-{\theta_j + \lambda_k\over 2}}}.
\end{gather*}

\subsubsection[$r\equiv 4 \,\,{\rm (mod\,\,\,8)}$]{$\boldsymbol{r\equiv 4 \,\,{\rm (mod\,\,\,8)}}$}

The \A-genus is given by
\begin{gather*}
\A(M)=\prod_{j_1=1}^{m_1}\prod_{k=1}^{2^{{r\over 2}-1}}{\theta_{j_1} + \lambda_k^+\over e^{\theta_{j_1} + \lambda_k^+\over 2}
-e^{-{\theta_{j_1} + \lambda_k^+\over 2}}}
\prod_{j_2=1}^{m_2}\prod_{k=1}^{2^{{r\over 2}-1}}{\theta_{j_2}' + \lambda_{k}^-\over e^{\theta_{j_2}'
+ \lambda_{k}^-\over 2}-e^{-{\theta_{j_2}' + \lambda_{k}^-\over 2}}}.
\end{gather*}

\subsubsection[$r\equiv 0 \,\,{\rm (mod\,\,\,8)}$]{$\boldsymbol{r\equiv 0 \,\,{\rm (mod\,\,\,8)}}$}
We can set
\begin{itemize}\itemsep=0pt
\item if $m_1$, $m_2$ are even,
\begin{gather*}
\A(M)
=\prod_{j_1=1}^{{m_1\over2}}\prod_{k=1}^{2^{{r\over 2}-1}}{\theta_{j_1}
+ \lambda_k^+\over e^{\theta_{j_1} + \lambda_k^+\over 2}
-e^{-{\theta_{j_1 }+ \lambda_k^+\over 2}}}
\prod_{j_2=0}^{{m_2\over 2}}\prod_{k=1}^{2^{{r\over 2}-1}}{\theta_{j_2}'
+ \lambda_{k}^-\over e^{\theta_{j_2}'
+ \lambda_{k}^-\over 2}
-e^{-{\theta_{j_2}' + \lambda_{k}^-\over 2}}};
\end{gather*}

\item if $m_1$ is even and $m_2$ is odd,
\begin{gather*}
\A(M)= \prod_{j_1=1}^{{m_1\over2}}\prod_{k=1}^{2^{{r\over 2}-1}}{\theta_{j_1}
+ \lambda_k^+\over e^{\theta_{j_1} + \lambda_k^+\over 2}
-e^{-{\theta_{j_1 }+ \lambda_k^+\over 2}}}
\prod_{j_2=1}^{[{m_2\over 2}]}\prod_{k=1}^{2^{{r\over 2}-1}}{\theta_{j_2}'
+ \lambda_{k}^-\over e^{\theta_{j_2}'
+ \lambda_{k}^-\over 2}
-e^{-{\theta_{j_2}' + \lambda_{k}^-\over 2}}}
\prod_{l=1}^{2^{{r\over 2}-2}} {\lambda_{l}^-\over e^{\lambda_{l}^-\over 2}
-e^{-{\lambda_{l}^-\over 2}}} ;
\end{gather*}

 \item if $m_1$ is odd and $m_2$ is even,
\begin{gather*}
\A(M) =\prod_{j_1=1}^{[{m_1\over2}]}\prod_{k=1}^{2^{{r\over 2}-1}}{\theta_{j_1}
+ \lambda_k^+\over e^{\theta_{j_1} + \lambda_k^+\over 2}
-e^{-{\theta_{j_1 }+ \lambda_k^+\over 2}}}
\prod_{l=1}^{2^{{r\over 2}-2}} {\lambda_{l}^+\over e^{\lambda_{l}^+\over 2}
-e^{-{\lambda_{l}^+\over 2}}}
\prod_{j_2=1}^{{m_2\over 2}}\prod_{k=1}^{2^{{r\over 2}-1}}{\theta_{j_2}'
+ \lambda_{k}^-\over e^{\theta_{j_2}'
+ \lambda_{k}^-\over 2}
-e^{-{\theta_{j_2}' + \lambda_{k}^-\over 2}}} ;
\end{gather*}

 \item if $m_1$, $m_2$ are odd,
\begin{gather*}
\A(M)
=\prod_{j_1=1}^{[{m_1\over2}]}\prod_{k=1}^{2^{{r\over 2}-1}}{\theta_{j_1}
+ \lambda_k^+\over e^{\theta_{j_1} + \lambda_k^+\over 2}
-e^{-{\theta_{j_1 }+ \lambda_k^+\over 2}}} \prod_{l=1}^{2^{{r\over 2}-2}} {\lambda_{l}^+\over e^{\lambda_{l}^+\over 2}
-e^{-{\lambda_{l}^+\over 2}}}\\
\hphantom{\A(M)=}{} \times
 \prod_{j_2=1}^{[{m_2\over 2}]}\prod_{k=1}^{2^{{r\over 2}-1}}{\theta_{j_2}'
+ \lambda_{k}^-\over e^{\theta_{j_2}'+ \lambda_{k}^-\over 2}-e^{-{\theta_{j_2}' + \lambda_{k}^-\over 2}}}\prod_{l=1}^{2^{{r\over 2}-2}} {\lambda_{l}^-\over e^{\lambda_{l}^-\over 2} -e^{-{\lambda_{l}^-\over 2}}} .
\end{gather*}
\end{itemize}

\subsection{Inf\/initesimal automorphisms}\label{subsec: infinitesimal automorphism}
An {\em automorphism} of an almost-${\rm Cl}_r^0$-Hermitian manifold $M$ is an isometry which preserves the almost even-Clif\/ford Hermitian structure. A vector f\/ield $X$ on $M$ is an {\em infinitesimal automorphism} if it is a Killing vector f\/ield that preserves the structure, i.e., locally
\begin{gather*}\mathcal{L}_X J_{ij} =\sum_{k<l} \alpha_{kl}^{(ij)} J_{kl},\end{gather*}
for some (local) functions $\alpha_{kl}^{(ij)}$, where $\mathcal{L}_X$ denotes the Lie derivative in the direction of~$X$. Consider
\begin{gather*}
{\mathcal L}_X(J_{ij}(Y)) = ({\mathcal L}_XJ_{ij})(Y) + J_{ij}({\mathcal L}_XY),
\end{gather*}
which can be written in terms of the Levi-Civita connection $\nabla$ as follows
\begin{gather*}
\nabla_X(J_{ij}(Y))-\nabla_{J_{ij}(Y)}X = \sum_{k<l} \alpha_{kl}^{(ij)} J_{kl}(Y)+J_{ij}(\nabla_XY-\nabla_YX).
\end{gather*}
Now, if $p\in M$ is such that $X_p=0$,
\begin{gather*}
-\nabla_{J_{ij}(Y)}X = \sum_{k<l} \alpha_{kl}^{(ij)} J_{kl}(Y)-J_{ij}(\nabla_YX),
\end{gather*}
i.e.,
\begin{gather*}
[J_{ij},\nabla X](Y) = \sum_{k<l} \alpha_{kl}^{(ij)} J_{kl}(Y).
\end{gather*}
Hence, $(\nabla X)_p$ is a skew-symmetric endomorphism such that
\begin{gather*}
[J_{ij},\nabla X] = \sum_{k<l} \alpha_{kl}^{(ij)} J_{kl}.
\end{gather*}
i.e., $(\nabla X)_p$ belongs to Lie algebra of the structure group of $M$ \cite{Arizmendi-Garcia-Herrera, Arizmendi-Herrera}.

We will say that a smooth circle action on an almost-${\rm Cl}_r^0$-Hermitian manifold is an {\em action by automorphisms} if the corresponding Killing vector f\/ield is an inf\/initesimal automorphism.

\begin{Example}
The $16$-dimensional symmetric space
\begin{gather*}{F_4\over {\rm Spin}(9)}\end{gather*}
has an almost-${\rm Cl}_9^0$-Hermitian structure admitting $S^1$ actions by automorphisms~\cite{Friedrich-weak}.
\end{Example}

\subsection[Exponents of the $S^1$ action]{Exponents of the $\boldsymbol{S^1}$ action}
In this section, let $M$ be a compact, rank $r$ almost even-Clif\/ford Hermitian manifold with a~non-trivial (ef\/fective) $S^1$ action by automorphisms. Let $P\subset M^{S^1}$ be an $S^1$-f\/ixed submanifold. The corresponding inf\/initesimal isometry $X$ is such that $(\nabla X)_p \in\mathfrak{so}(N)$ at any f\/ixed point $p\in P$. This corresponds to the induced action of~$S^1$ to~$T_pM$, and such a circle lies in a maximal torus. The tangent space at $p$ decomposes as in Section~\ref{subsec: localization}. In fact, we can now be more precise about these exponents. By Section~\ref{subsec: infinitesimal automorphism}, $(\nabla X)_p$ belongs to a Cartan subalgebra of the Lie algebra of the structure group, and we can assume that the decomposition \eqref{eq: decomposition1} is compatible with a~decomposition such as~\eqref{eq: decomposition2} into complex lines with respect to a~maximal torus of such a~group. Hence, we can read of\/f the exponents of the action with respect to the weights given in Section~\ref{subsec: weights}:
\begin{gather*}
\begin{array}{|c|c|c|c|}
 \hline
 r \mbox{\ {\rm (mod 8)} } & \pm q_i & &
\\
\hline
 0 & {t_{j_1}\pm h_k^+\over 2},\, {t_{j_2}'\pm h_k^-\over 2}& 1\leq j_1\leq [{m_1\over 2}], & 1\leq k\leq 2^{[{r\over 2}]-2}\tsep{4pt}
\\
& \big( {h_l\over2}, \, {h_{2^{{r\over 2}-1}+l}\over2} \big) & 1\leq j_2\leq [{m_2\over 2}] & \big(1\leq l\leq 2^{[{r\over 2}]-3}\big)\bsep{3pt}\\
 \hline
 1,\, 7 & {t_j\pm h_k\over 2} \ \big( {h_l\over2} \big) & 1\leq j\leq [{m\over 2}] & 1\leq k\leq 2^{[{r\over 2}]-1} \tsep{4pt}\\
 &&& \big(1\leq l\leq 2^{[{r\over 2}]-2}\big)\bsep{3pt}\\
\hline
 2,\, 6 & {t_j \pm h_k^+\over 2},\, {-t_j \pm h_k^-\over 2} & 1\leq j\leq m & 1\leq k\leq 2^{[{r\over 2}]-2}\tsep{4pt} \bsep{3pt}\\
\hline
 3,\, 5 & {t_j\pm h_k\over 2} & 1\leq j\leq m & 1\leq k\leq 2^{[{r\over 2}]-1}\tsep{4pt} \bsep{3pt} \\
\hline
 4 & {t_{j_1}\pm h_k^+\over 2}, \, {t_{j_2}'\pm h_k^-\over 2} & 1\leq j_1\leq m_1, \, 1\leq j_2\leq m_2 & 1\leq k\leq 2^{[{r\over 2}]-2}\tsep{4pt} \bsep{3pt}\\
\hline
 \end{array}
\end{gather*}
where $m$, $m_1$, $m_2$ denote the corresponding multiplicities. Here, the numbers ${t_j\over 2}$, ${t_j'\over 2}$ are the exponents corresponding to the complex representations $E$, $E_1$, $E_2$ described in Section~\ref{subsubsection: structure groups}, $f_j$ are the exponents for the ${\rm SO}(r)$ representation for $r$ odd or $\mathbb{P}{\rm SO}(r)$ representation for $r$ even, $h_k$ denote the numbers
\begin{gather*} \pm f_1\pm\cdots \pm f_{[{r\over 2}]}\end{gather*}
in some order for $r$ odd, and $h_k^\pm$ denote the numbers
\begin{gather*} \pm f_1\pm\cdots \pm f_{r\over 2}\end{gather*}
with an even or odd number of negative signs respectively, listed in some order for $r$ even.

\subsection{Vanishing theorems}

In this section, we give the main details of the proofs of the vanishing theorems.

\begin{Theorem}\label{theo: vanishing 1} Let $M$ be a compact $N$-dimensional almost-${\rm Cl}_r^0$-Hermitian admitting a smooth circle action by automorphisms, $r\geq 3$. Let $E$, $E_1$, $E_2$ be the $($locally defined$)$ bundles described in~\eqref{eq: complexified tangent space}, $m$, $m_1$, $m_2$ the corresponding multiplicities and $u$, $u_1$, $u_2$, $s$, $t$ be non-negative integers satisfying the conditions given in Sections~{\rm \ref{subsec: conditions 1}} and {\rm \ref{subsec: conditions 2}}. Then,
\begin{itemize}\itemsep=0pt
\item for $r\equiv 1,\, 7 \,\,({\rm mod\,\,\,} 8)$, if $0\leq u+s<[\frac{m}{2}]$,
\begin{gather*}\big\langle \ch(\ext^u E)\ch(\Delta_r)^s\A(M),[M]\big\rangle =0;\end{gather*}

\item for $r\equiv 3,\, 5 \,\,({\rm mod\,\,\,} 8)$, if $0\leq u+s<m$,
 \begin{gather*}\big\langle \ch(\ext^u E)\ch(\Delta_r)^s\A(M),[M]\big\rangle =0;\end{gather*}

\item for $r\equiv 0 \,\,({\rm mod\,\,\,} 8)$, if $0\leq u_1+s<[\frac{m_1}{2}]$, $0\leq u_2+t<[\frac{m_2}{2}]$,
\begin{gather*}\big\langle \ch(\ext^{u_1} E_1)\ch(\ext^{u_2} E_2)\ch(\Delta_r^+)^{s}\ch(\Delta_r^-)^{t}\A(M),[M]\big\rangle =0;\end{gather*}

\item for $r\equiv 2,\, 6 \,\,({\rm mod\,\,\,} 8)$, if $0\leq u_1+s<m$ and $0 \leq u_2+t <m$, or if $0\leq u_1+t<m$ and $0 \leq u_2+s <m$,
\begin{gather*}\big\langle \ch(\ext^{u_1} E)\ch(\ext^{u_2} \overline{E})\ch(\Delta_r^+)^{s}\ch(\Delta_r^-)^{t}\A(M),[M]\big\rangle =0;\end{gather*}

\item for $r\equiv 4 \,\,({\rm mod\,\,\,} 8)$, if $0\leq u_1+s<m_1$ and $0\leq u_2+t<m_2$,
\begin{gather*}\big\langle \ch(\ext^{u_1} E_1)\ch(\ext^{u_2} E_2)\ch(\Delta_r^+)^{s}\ch(\Delta_r^-)^{t}\A(M),[M]\big\rangle =0.\end{gather*}
\end{itemize}
If the inequalities are not strict, the indices are rigid.
\end{Theorem}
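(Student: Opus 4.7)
The plan is to apply the equivariant Atiyah--Singer fixed point formula from Section~\ref{subsec: localization} and reduce the vanishing/rigidity claim to the fundamental inequality~\eqref{eq: fundamental inequality}. Since the $S^1$-action is by automorphisms, Section~\ref{subsec: infinitesimal automorphism} gives that at every fixed point $p$ the endomorphism $(\nabla X)_p$ lies in a Cartan subalgebra of the Lie algebra of the structure group; the parity conditions on $u,u_1,u_2,s,t$ in Sections~\ref{subsec: conditions 1}--\ref{subsec: conditions 2} are then exactly what is needed for $\Delta_N\otimes F$ to descend to a representation of the structure group, so that $\dirac\otimes F$ is globally defined and $S^1$-equivariant. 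The tangent exponents $\pm q_i$ and the twisting exponents $n_k$ at $p$ are governed by combinations of the weights $t_j$, $t_j'$ and $h_k^{(\pm)}$ tabulated in Sections~\ref{subsec: weights} and~\ref{subsec: infinitesimal automorphism}.

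The central step is the verification of $|n_k|<\tfrac12\sum_i|q_i|$ strictly at every fixed submanifold $P$ (and of the non-strict version under non-strict hypotheses). To illustrate for $r\equiv 1,7\pmod 8$ with $F=\ext^u E\otimes \Delta_r^{\otimes s}$, each weight of $F$ is a sum of at most $u$ weights $\pm t_{j_a}/2$ with distinct $j_a$ together with $s$ spin weights $h_{k_b}/2$, so $|n_k|\le\tfrac12\sum_{a=1}^u|t_{j_a}|+\tfrac{s}{2}\sum_{c=1}^{[r/2]}|f_c|$. On the other hand, I would pair the tangent exponents $(t_j\pm h_k)/2$ and invoke the pointwise inequality $|t_j+h_k|+|t_j-h_k|\ge 2\max(|t_j|,|h_k|)$ to lower-bound $\tfrac12\sum_i|q_i|$ by a linear combination of the $|t_j|$'s and of the $|f_c|$'s, with combinatorial coefficients so large that the $[m/2]-u$ values of $|t_j|$ which do not appear in $n_k$, together with the averaged spin contribution, strictly dominate the right-hand side whenever $u+s<[m/2]$. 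The same paradigm should handle the other residue classes, with two-multiplicity bookkeeping for $r\equiv 0,4\pmod 8$ and conjugate-bundle bookkeeping for $r\equiv 2,6\pmod 8$.

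Once the strict inequality is in hand at every $P$, Lemma~\ref{lemma: useful lemma} implies that each local contribution $\mu(P,z)$ is a rational function of $z$ vanishing at both $0$ and $\infty$; summing, $\ind(\dirac\otimes F)_z$ is simultaneously such a rational function and a Laurent polynomial in $R(S^1)$, hence identically zero, and evaluation at $z=1$ gives $\langle\A(M)\ch(F),[M]\rangle=0$. Under non-strict hypotheses each $\mu(P,z)$ can only be bounded (rather than vanishing) at $0$ and $\infty$, forcing $\ind(\dirac\otimes F)_z$ to be a constant Laurent polynomial, which is precisely the rigidity statement. The hard part will be the explicit combinatorial verification of the fundamental inequality in each residue class of $r\pmod 8$, particularly for $r\equiv 0\pmod 8$, where four parity subcases of $(m_1,m_2)$ each demand separate bookkeeping of the ${\rm SO}(m_1)\times{\rm SO}(m_2)\times \mathbb{P}{\rm SO}(r)$ weight systems.
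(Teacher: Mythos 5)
Your strategy coincides with the paper's: lift the $S^1$-action to the twisted spinor bundle (possible because the action is by automorphisms, so $(\nabla X)_p$ lands in a Cartan subalgebra of the structure group's Lie algebra), apply the Atiyah--Singer localization formula, reduce to the fundamental inequality~\eqref{eq: fundamental inequality} at each fixed-point component, and invoke Lemma~\ref{lemma: useful lemma} for vanishing (strict inequality) or rigidity (non-strict). You also correctly identify the engine of the argument: the $[m/2]-u$ indices left unused by the exterior-power factor are what absorb the $s$ spin-weight contributions.

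The one place where you genuinely diverge from the paper is in how the fundamental inequality is actually closed. You propose the pointwise estimate $|t_j+h_k|+|t_j-h_k|\ge 2\max(|t_j|,|h_k|)$ followed by a counting argument; the paper instead uses an exact algebraic rewriting. After choosing $s$ unused indices $j_1<\cdots<j_s$ disjoint from $\{i_1,\dots,i_u\}$ (possible precisely because $u+s<[m/2]$), it expands each $t_{i_a}$ as $\tfrac{t_{i_a}+h_k}{2}+\tfrac{t_{i_a}-h_k}{2}$ and each of the $s$ copies of $h_k$ as $\tfrac{h_k+t_{j_b}}{2}+\tfrac{h_k-t_{j_b}}{2}$, so that the twist exponent becomes \emph{literally} a signed sum of $2(u+s)$ distinct tangent exponents $\tfrac{t_i\pm h_k}{2}$; the triangle inequality then gives $2|n_k|\le\sum_i(|\tfrac{t_i+h_k}{2}|+|\tfrac{t_i-h_k}{2}|)$, and strictness is supplied by the tangent exponents with $l\neq k$. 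This ``padding'' identity is cleaner than the max-inequality route because it produces tangent exponents directly rather than an intermediate $\max$-bound that must then be related back to $\sum|q_c|$. Your route is plausibly workable (taking $k$ to be the spin weight of largest modulus, $\sum_j\max(|t_j|,|h_k|)\ge\sum_a|t_{i_a}|+([m/2]-u)|h_k|>\sum_a|t_{i_a}|+s|h_k|$ does the job), but you would still need to verify separately that strictness survives degenerate cases such as $h_k=0$, and your stated bound $|n_k|\le\tfrac12\sum_a|t_{j_a}|+\tfrac{s}{2}\sum_c|f_c|$ is looser than needed and not in a form that compares directly to $\tfrac12\sum|q_c|$. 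Both proofs carry out the estimate explicitly only for $r\equiv 1,7\pmod 8$ with $m$ even and wave at the remaining residue classes as analogous, so you share the same incompleteness with the paper there.
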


\begin{proof}
Since the $S^1$ action is by automorphisms of the almost even-Clif\/ford Hermitian structure, the action lifts to the bundles associated to the structure group, such as the twisted spin bundles we are considering. Given that the arguments are similar in all cases, we will only describe the calculation for $r\equiv 1,\, 7$ (mod~8) and $m$ even.

Let $P\subset M^{S^1}$ be an $S^1$-f\/ixed submanifold. By Section~\ref{subsec: infinitesimal automorphism}, over~$P$ the circle group of automorphisms maps non-trivially to the structure group ${\rm SO}(m){\rm Spin}(r)$, so that the f\/ibers of the bundles $\Delta_N\otimes \ext^uE\otimes \Delta_r^{\otimes s}$ over points of~$P$ decompose as sums of representations of~$S^1$. Recall that
\begin{gather*}
 \ch\big(\ext^uE\big) = \sum_{1\leq i_1<\dots<i_u\leq 2[\frac{m}{2}]}e^{\vartheta_{i_1} +\dots+\vartheta_{i_u}},
\end{gather*}
where
\begin{gather*}
 \vartheta_j = \theta_j,\qquad \vartheta_{[m/2]+j} = -\theta_j,\qquad j=1,\dots,[m/2].
\end{gather*}

Thus, the exponents of the twist will be of the form
\begin{gather*}{1\over 2}\left(\sum_{a=1}^c (-1)^{\varepsilon_a}t_{i_a}+\sum_{b=1}^s (-1)^{\delta_b}h_{l_b}\right),\end{gather*}
where $0\leq c\leq u$, $\varepsilon_a,\delta_b\in \{ 0,1 \}$. There are two points to verify in the proof: f\/irstly, that the contributions~$\mu(P,z)$ are rational functions and, secondly, that the exponents of the twisting bundles and the tangent space satisfy the inequality~\eqref{eq: fundamental inequality}.

The f\/irst one follows from the fact that the f\/ibers of the bundles $\Delta_N\otimes \ext^uE\otimes \Delta_r^{\otimes s}$ over $P$ decompose as sums of representations of $S^1$. Formally, according to the splitting principle, if
\begin{gather*}
TM_c= L_1\oplus L_1^{-1}\oplus\cdots\oplus L_{N/2}\oplus L_{N/2}^{-1},
\end{gather*}
then
\begin{gather*}
\Delta_N= \big(L_1^{1/2}\oplus L_1^{-1/2}\big)\otimes\cdots \otimes\big(L_{N/2}^{1/2}\oplus L_{N/2}^{-1/2}\big)\\
\hphantom{\Delta_N}{} = L_1^{1/2}\otimes\cdots\otimes L_{N/2}^{1/2}\oplus\cdots\oplus L_1^{-1/2}\otimes\cdots\otimes L_{N/2}^{-1/2},
\end{gather*}
so that the $S^1$-exponents on these lines will be of the form
\begin{gather*}\sum \left(\pm{t_j\pm h_k\over 4}\right) + \sum \left(\pm{h_l\over 4}\right).\end{gather*}
The bundle $\Delta_N\otimes \ext^uE\otimes \Delta_r^{\otimes s}$ will have integer exponents over $P$ of the form
\begin{gather*}
{1\over 2}\left(\sum_{a=1}^c (-1)^{\varepsilon_a}t_{i_a}+\sum_{b=1}^s (-1)^{\delta_b}h_{l_b}\right)
+\sum_{q_j\not=0} \left((-1)^{\gamma_j}{t_j+ h_k\over 4}\right) \\
\qquad{} +\sum_{q_{j'}\not=0} \left((-1)^{\gamma_{j'}}{t_{'j}- h_k\over 4}\right)
+ \sum_{q_l\not=0} \left((-1)^{\zeta_l}{h_l\over 4}\right).
\end{gather*}
Thus, the powers of $z$ in each summand of $\mu(P,z)$ can be rearranged in order to show that such a summand is a product of rational functions
such as the one described in Lemma~\ref{lemma: useful lemma}.

For the second point, it is suf\/f\/icient to consider the exponents of the form
 \begin{gather*}{1\over 2}\left(\sum_{a=1}^u (-1)^{\varepsilon_a}t_{i_a}\pm sh_k\right).\end{gather*}
Since $u+s<[\frac{m}{2}]$, there exists an $s$-tuple of indices $j_1<\dots < j_s$ such that $\{j_1,\dots, j_s\}\subset \{1,\dots,[\frac{m}{2}]\}-\{i_1,\dots,i_u\}$. Thus,
 \begin{gather*}
 \left\vert\sum_{a=1}^u (-1)^{\varepsilon_a}t_{i_a}\pm sh_k\right\vert =
 \left\vert\sum_{a=1}^u (-1)^{\varepsilon_a}\left({t_{i_a}+h_k\over 2}+{t_{i_a}-h_k\over 2}\right)
 \pm \sum_{b=1}^s \left({h_k+t_{j_b}\over 2}+{h_k-t_{j_b}\over 2}\right) \right\vert\\
 \hphantom{\left\vert\sum_{a=1}^u (-1)^{\varepsilon_a}t_{i_a}\pm sh_k\right\vert}{}
 \leq\sum_{a=1}^u
 \left(\left\vert{t_{i_a}+h_k\over 2}\right\vert+\left\vert{t_{i_a}-h_k\over 2}\right\vert\right)
 + \sum_{b=1}^s \left(\left\vert{h_k+t_{j_b}\over 2}\right\vert+\left\vert{h_k-t_{j_b}\over 2}\right\vert\right)\\
\hphantom{\left\vert\sum_{a=1}^u (-1)^{\varepsilon_a}t_{i_a}\pm sh_k\right\vert}{} \leq\sum_{i=1}^m
 \left|{t_{i}+h_k\over 2}\right|+\left|{t_{i}-h_k\over 2}\right|
<\sum_{l=1}^{2^{[r/2]-1}}\sum_{i=1}^m
 \left|{t_{i}+h_l\over 2}\right|+\left|{t_{i}-h_l\over 2}\right|\\
\hphantom{\left\vert\sum_{a=1}^u (-1)^{\varepsilon_a}t_{i_a}\pm sh_k\right\vert}{} \leq \sum_{c=1}^{N/2} |q_c|,
 \end{gather*}
which is the corresponding version of the inequality \eqref{eq: fundamental inequality} in Section~\ref{subsec: localization}
\end{proof}

\begin{Theorem}\label{theo: vanishing 2} Let $M$ be a compact $N$-dimensional almost-${\rm Cl}_r^0$-Hermitian admitting a smooth circle action by automorphisms, $r\geq 3$. Let $E$, $E_1$, $E_2$ be the $($locally defined$)$ bundles described in~\eqref{eq: complexified tangent space}, $m$, $m_1$, $m_2$ the corresponding multiplicities and $u$, $u_1$, $u_2$, $s$, $t$ be non-negative integers satisfying the conditions given in Sections~{\rm \ref{subsec: conditions 1}} and~{\rm \ref{subsec: conditions 2}}. Then,
\begin{itemize}\itemsep=0pt
\item for $r\equiv 1,\, 7 \,\,({\rm mod\,\,\,} 8)$, if $0\leq u+s<[\frac{m}{2}]$ and $u\leq 2^{[{r\over 2}]-1}$,
 \begin{gather*}\big\langle \ch(\sym^u E)\ch(\Delta_r)^s\A(M),[M]\big\rangle =0;\end{gather*}

\item for $r\equiv 3,\, 5 \,\,({\rm mod\,\,\,} 8)$, if $0\leq u+s<m$ and $u\leq 2^{[{r\over 2}]-1}$,
 \begin{gather*}\big\langle \ch(\sym^u E)\ch(\Delta_r)^s\A(M),[M]\big\rangle =0;\end{gather*}

\item for $r\equiv 0 \,\,({\rm mod\,\,\,} 8)$, if $0\leq u_1+s<[\frac{m_1}{2}]$, $0\leq u_2+t<[\frac{m_2}{2}]$ and $u_1,u_2\leq 2^{[{r\over 2}]-2}$,
 \begin{gather*}\big\langle \ch(\sym^{u_1} E_1)\ch(\sym^{u_2} E_2)\ch(\Delta_r^+)^{s}\ch(\Delta_r^-)^{t}\A(M),[M]\big\rangle =0;\end{gather*}

\item for $r\equiv 2,\, 6 \,\,({\rm mod\,\,\,} 8)$, if $u_1,u_2\leq 2^{[{r\over 2}]-2}$ and one of $0\leq u_1+s<m$, $0 \leq u_2+t <m$ or $0\leq u_1+t<m$, $0 \leq u_2+s <m$,
 \begin{gather*}\big\langle \ch(\sym^{u_1} E)\ch(\sym^{u_2} \overline{E})\ch(\Delta_r^+)^{s}\ch(\Delta_r^-)^{t}\A(M),[M]\big\rangle =0;\end{gather*}

\item for $r\equiv 4 \,\,({\rm mod\,\,\,} 8)$, if $0\leq u_1+s<m_1$, $0\leq u_2+t<m_2$ and $u_1,u_2\leq 2^{[{r\over 2}]-2}$,
 \begin{gather*}\big\langle \ch(\sym^{u_1} E_1)\ch(\sym^{u_2} E_2)\ch(\Delta_r^+)^{s}\ch(\Delta_r^-)^{t}\A(M),[M]\big\rangle =0.\end{gather*}
\end{itemize}
If the inequalities are not strict, the indices are rigid.
\end{Theorem}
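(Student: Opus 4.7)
The plan is to mirror the proof of Theorem~\ref{theo: vanishing 1} almost verbatim, applying the Atiyah--Singer fixed point formula to the twisted Dirac operator with coefficients in $\sym^{u} E\otimes \Delta_r^{\otimes s}$ (or the corresponding multi-factor bundle for the other congruence classes of~$r$), and verifying the fundamental inequality~\eqref{eq: fundamental inequality} at every $S^1$-fixed component $P\subset M^{S^1}$. As in Theorem~\ref{theo: vanishing 1}, once~\eqref{eq: fundamental inequality} holds strictly at every $P$, each local contribution $\mu(P,z)$ is a rational function in $z$ with zeros at $0$ and $\infty$, so by Lemma~\ref{lemma: useful lemma} the equivariant index $\ind(\dirac\otimes F)_z\in\mathbb{Z}[z,z^{-1}]$ must vanish identically; when the inequality is only weak, the same argument shows boundedness of $\mu(P,z)$ at $0$ and $\infty$, so $\ind(\dirac\otimes F)_z$ is a constant and the operator is rigid.

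The one genuinely new ingredient is the weight analysis of $\sym^{u}$ versus $\ext^{u}$. I would start with the model case $r\equiv 1,7\pmod 8$ and $m$ even. A weight of $\sym^{u} E$ now has the form $\sum_{j=1}^{m/2} c_j\theta_j$ with $c_j\in\mathbb{Z}$ and $\sum_j|c_j|\leq u$, so a typical exponent of the lifted $S^1$-action on the twist over $P$ is
\begin{gather*}
n_k \;=\; \tfrac{1}{2}\Bigl(\textstyle\sum_{j} c_j t_j \;+\; \sum_{b=1}^{s}(-1)^{\delta_b}h_{l_b}\Bigr).
\end{gather*}
In Theorem~\ref{theo: vanishing 1} each index appeared at most once, so the single pairing $t=\tfrac12(t+h_k)+\tfrac12(t-h_k)$ was enough per index. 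Here a single $t_j$ may appear with coefficient $|c_j|$ as large as $u$, and I would absorb it by applying this identity $|c_j|$ times using $|c_j|$ \emph{distinct} values of $k$; this is exactly what the extra hypothesis $u\leq 2^{[r/2]-1}$ buys, since $2^{[r/2]-1}$ is the number of $h_k$'s available to pair with each $t_j$ in the tangent-space exponent table.

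The handling of the $\sum_b(-1)^{\delta_b}h_{l_b}$ piece is unchanged: because $|\{j:c_j\ne 0\}|\leq\sum_j|c_j|\leq u$ and $u+s<[m/2]$, I can still pick $s$ fresh indices $j_1,\dots,j_s$ outside the support of $c$ and bound $|h_{l_b}|\leq\tfrac12(|t_{j_b}+h_{l_b}|+|t_{j_b}-h_{l_b}|)$. Summing these estimates without reusing any tangent exponent $\tfrac{t_j\pm h_k}{2}$ yields $|n_k|\leq\tfrac{1}{2}\sum_c|q_c|$, with strict inequality coming from at least one unused index pair of positive absolute value, which exists since the action is effective and $u+s<[m/2]$. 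For $r\equiv 3,5\pmod 8$ the argument is identical with $m$ in place of $[m/2]$; for $r\equiv 0,4\pmod 8$ and $r\equiv 2,6\pmod 8$ one repeats the estimate factor by factor on the chiral pieces $\Delta_r^\pm$ with the multiplicities $E_1,E_2$ (or $E,\overline E$), using the bounds $u_1,u_2\leq 2^{[r/2]-2}$ in the role of $u\leq 2^{[r/2]-1}$ and the parity constraints of Section~\ref{subsec: conditions 2} to ensure that the twisting bundle is globally defined.

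The main obstacle is not conceptual but combinatorial bookkeeping: one must check, congruence class by congruence class---and in particular in the split cases $r\equiv 0\pmod 8$ with mixed parities of $m_1,m_2$, where isolated $h_l^\pm/2$ exponents appear in the table of Section~\ref{subsec: infinitesimal automorphism}---that the pairings $(t_j\pm h_k)/2$ assigned to the $c_j t_j$ terms and those assigned to the $h_{l_b}$ terms never collide on the same $q_c$. The two budget constraints $|c_j|\leq u\leq 2^{[r/2]-1}$ (or $2^{[r/2]-2}$) and $u+s<[m/2]$ are exactly the quantitative conditions needed for such an injective assignment to exist, so the argument goes through uniformly once the assignment is fixed.
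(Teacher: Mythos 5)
Your proposal follows essentially the same route as the paper's proof: apply the fixed-point formula as in Theorem~\ref{theo: vanishing 1}, and handle the new feature of $\sym^u$ — repeated indices $t_j$ — by splitting $c_j t_j$ into $|c_j|$ copies of $t_j = \tfrac12(t_j+h_k)+\tfrac12(t_j-h_k)$ using $|c_j|$ \emph{distinct} $h_k$'s, which is precisely what the bound $u\leq 2^{[r/2]-1}$ (resp.\ $u_1,u_2\leq 2^{[r/2]-2}$) guarantees, while $s$ fresh $j$-indices outside the support of $c$ absorb the $h_{l_b}$ contributions as before. The paper's write-up actually only verifies the two extreme cases (weights already of exterior-power form, and the fully repeated weight $ut_1$), leaving the interpolating weights implicit; your version with general $\sum_j c_j\theta_j$, $\sum_j|c_j|\le u$, and the explicit disjointness check on the $(j,k)$-pairs fills in that bookkeeping and is, if anything, slightly more careful than the printed argument.
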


\begin{proof}
We will only describe the relevant changes to the calculation for $r\equiv 1,\, 7$ (mod 8) and~$m$ even. Let $P\subset M^{S^1}$ be an $S^1$-f\/ixed submanifold. Recall that
\begin{gather*}
 \ch\big(S^uE\big) = \sum_{1\leq i_1\leq\dots\leq i_u\leq 2[\frac{m}{2}]}e^{\vartheta_{i_1} +\dots+\vartheta_{i_u}},
\end{gather*}
where
\begin{gather*}
 \vartheta_j = \theta_j,\qquad \vartheta_{[m/2]+j} = -\theta_j, \qquad j=1,\dots,[m/2].
\end{gather*}

Thus, the exponents of the twist will be of the form
 \begin{gather*}{1\over 2}\left(\sum_{a=1}^u (-1)^{\varepsilon_a}t_{i_a}+\sum_{b=1}^s (-1)^{\delta_b}h_{l_b}\right),\end{gather*}
where $\varepsilon_a,\delta_b\in \{ 0,1 \}$. It is suf\/f\/icient to consider the exponents of the form
\begin{gather*}{1\over 2}\left(\sum_{a=1}^u (-1)^{\varepsilon_a}t_{i_a}\pm sh_k\right).\end{gather*}
Among them, there are two extreme types, namely the ones equal to exponents of the exterior powers which we already know how to deal with, and the ones such as $u t_1$. For such an exponent, consider
\begin{gather*}
 |ut_1| = \left|\sum_{l=1}^u{t_1+h_l\over 2} + {t_1-h_l\over 2}\right|
 \leq \sum_{l=1}^u\left|{t_1+h_l\over 2}\right| + \left|{t_1-h_l\over 2}\right|
\leq \sum_{l=1}^{2^{[{r\over 2}]-1}}\left|{t_1+h_l\over 2}\right| + \left|{t_1-h_l\over 2}\right|\\
\hphantom{|ut_1|}{} < \sum_{j=1}^{[{m\over 2}]}\sum_{l=1}^{2^{[{r\over 2}]-1}}\left|{t_1+h_l\over 2}\right| + \left|{t_1-h_l\over 2}\right|
\leq \sum_{c=1}^{N/2} |q_c|,
\end{gather*}
if $u<2^{[r/2]-1}$.
\end{proof}

\begin{Theorem}\label{theo: vanishing 3} Let $M$ be a compact $N$-dimensional almost-${\rm Cl}_r^0$-Hermitian admitting a smooth circle action by automorphisms, $r\geq 3$. Let $E$, $E_1$, $E_2$ be the $($locally defined$)$ bundles described in~\eqref{eq: complexified tangent space}, $m$, $m_1$, $m_2$ the corresponding multiplicities and $u_i$, $v_i$, $u'_i$, $v'_i$, $s$, $t$ be non-negative integers satisfying analogous conditions to those given in Sections~{\rm \ref{subsec: conditions 1}} and~{\rm \ref{subsec: conditions 2}}. Then,
 \begin{itemize}\itemsep=0pt
\item for $r\equiv 1,\,7 \,\,({\rm mod\,\,\,} 8)$, if
 \begin{gather*}
 0\leq \sum_{i=1}^b u_i+\sum_{j=1}^b v_j+s<\left[\frac{m}{2}\right]
\end{gather*}
and
\begin{gather*}
 a + \sum_{i=1}^b v_i\leq 2^{[{r\over 2}]-1}, \\
 \bigg\langle \ch\bigg(\bigotimes_{i=1}^a\ext^{u_i}E\otimes \bigotimes_{j=1}^b\sym^{v_j} E\otimes (\Delta_r)^{\otimes s}\bigg)\A(M),[M]\bigg\rangle =0;\end{gather*}

\item for $r\equiv 3,\,5 \,\,({\rm mod\,\,\,} 8)$, if
 \begin{gather*}
 0\leq \sum_{i=1}^b u_i+\sum_{j=1}^b v_j+s<m
 \end{gather*}
 and
 \begin{gather*}
 a + \sum_{i=1}^b v_i\leq 2^{[{r\over 2}]-1}, \\
 \bigg\langle \ch\bigg(\bigotimes_{i=1}^a \ext^{u_i}E\otimes\bigotimes_{j=1}^b \sym^{v_j} E\otimes(\Delta_r)^{\otimes s}\bigg)\A(M),[M]\bigg\rangle =0;\end{gather*}

\item for $r\equiv 0 \,\,({\rm mod\,\,\,} 8)$, if
 \begin{gather*}
 0\leq \sum_{i=1}^b u_i+\sum_{j=1}^b v_j+s <\left[\frac{m_1}{2}\right], \qquad 0\leq \sum_{i=1}^c u'_i+\sum_{j=1}^d v'_j+t<\left[\frac{m_2}{2}\right]
 \end{gather*} and
 \begin{gather*}
 a + \sum_{i=1}^b v_i, c + \sum_{i=1}^d v'_i\leq 2^{[{r\over 2}]-2}, \\
 \bigg\langle \ch\bigg(\bigotimes_{i=1}^a\ext^{u_i}E_1\otimes\bigotimes_{j=1}^b\sym^{v_j} E_1 \otimes
 \bigotimes_{k=1}^c\ext^{u'_k}E_2\otimes\bigotimes_{l=1}^d\sym^{v'_l} E_2 \\
 \qquad{} \otimes (\Delta_r^+)^{\otimes s}\otimes(\Delta_r^-)^{\otimes t} \bigg)
 \A(M),[M]\bigg\rangle =0;\end{gather*}

 \item for $r\equiv 2 \,\,({\rm mod\,\,\,} 8)$, if
 \begin{gather*}
 a + \sum_{i=1}^b v_i,\qquad c + \sum_{i=1}^d v'_i \leq 2^{[{r\over 2}]-2},\end{gather*}
and
 \begin{gather*}
 0\leq \sum_{i=1}^b u_i+\sum_{j=1}^b v_j+s<m,\qquad 0 \leq \sum_{i=1}^c u'_i+\sum_{j=1}^d v'_j+t <m
 \end{gather*}
 or
 \begin{gather*}
 0\leq \sum_{i=1}^b u_i+\sum_{j=1}^b v_j+t<m,\qquad 0 \leq \sum_{i=1}^c u'_i+\sum_{j=1}^d v'_j+s <m, \\
 \bigg\langle \ch\bigg(\bigotimes_{i=1}^a\ext^{u_i}E\otimes\bigotimes_{j=1}^b\sym^{v_j} E
 \otimes\bigotimes_{k=1}^c \ext^{u'_k}\overline{E}\otimes\bigotimes_{l=1}^d\sym^{v'_l} \overline{E} \\
 \qquad{} \otimes (\Delta_r^+)^{\otimes s}\otimes (\Delta_r^-)^{\otimes t} \bigg)
 \A(M),[M]\bigg\rangle =0;\end{gather*}

 \item for $r\equiv 4 \,\,({\rm mod\,\,\,} 8)$, if
 \begin{gather*}
 0\leq \sum_{i=1}^b u_i+\sum_{j=1}^b v_j+s<m_1,\qquad 0\leq \sum_{i=1}^c u'_i+\sum_{j=1}^d v'_j+t <m_2
 \end{gather*}
 and
 \begin{gather*}
 a + \sum_{i=1}^b v_i,\qquad c + \sum_{i=1}^d v'_i\leq 2^{[{r\over 2}]-2},
 \\
 \bigg\langle \ch\bigg(\bigotimes_{i=1}^a\ext^{u_i}E_1\otimes\bigotimes_{j=1}^b\sym^{v_j} E_1 \otimes
 \bigotimes_{k=1}^c\ext^{u'_k}E_2\otimes\bigotimes_{l=1}^d\sym^{v'_l} E_2 \\
 \qquad{} \otimes(\Delta_r^+)^{\otimes s}\otimes(\Delta_r^-)^{\otimes t} \bigg) \A(M),[M]\bigg\rangle =0.\end{gather*}
 \end{itemize}
 If the inequalities are not strict, the indices are rigid.
 \end{Theorem}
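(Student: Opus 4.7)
The plan is to follow the Atiyah--Singer localization template used for Theorems~\ref{theo: vanishing 1} and~\ref{theo: vanishing 2}. Since the $S^1$-action is by automorphisms, Section~\ref{subsec: infinitesimal automorphism} ensures it lifts to the twisted spin bundle of the statement, and the equivariant index decomposes as $\sum_P\mu(P,z)$ over fixed submanifolds $P\subset M^{S^1}$. As in Section~\ref{subsec: localization}, the argument reduces to checking (i)~each $\mu(P,z)$ is a rational function of $z$, and (ii)~every twist exponent $n_k$ at $P$ satisfies the fundamental inequality~\eqref{eq: fundamental inequality}; Lemma~\ref{lemma: useful lemma} then forces vanishing.

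Step (i) is handled in the same way as in the proofs of Theorems~\ref{theo: vanishing 1} and~\ref{theo: vanishing 2}: the splitting-principle description of $\Delta_N$ over $P$, combined with the Chern characters
\begin{gather*}
\ch\bigl(\ext^{u_i}E\bigr)=\sum_{j_1<\cdots<j_{u_i}}e^{\vartheta_{j_1}+\cdots+\vartheta_{j_{u_i}}},\qquad
\ch\bigl(\sym^{v_j}E\bigr)=\sum_{k_1\leq\cdots\leq k_{v_j}}e^{\vartheta_{k_1}+\cdots+\vartheta_{k_{v_j}}},
\end{gather*}
writes $\mu(P,z)$ as a finite sum of products of factors of the type covered by Lemma~\ref{lemma: useful lemma}. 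The substantive step is~(ii). The exterior factors together with $(\Delta_r)^{\otimes s}$ (or the analogous product $(\Delta_r^+)^{\otimes s}\otimes(\Delta_r^-)^{\otimes t}$) are handled by the argument of Theorem~\ref{theo: vanishing 1}: the hypothesis on $\sum u_i+\sum v_j+s$ supplies enough $t$-indices outside those consumed by the exterior parts to trade each spin weight $h_k$ for an expression $\tfrac{t+h_k}{2}+\tfrac{t-h_k}{2}$ with fresh $t$. The symmetric factors produce extremal exponents of the form $v_j\,t_{i_0}$ that do not arise from exterior factors; as in Theorem~\ref{theo: vanishing 2}, each such exponent must be split against $v_j$ \emph{distinct} spin weights via $t_{i_0}=\tfrac{t_{i_0}+h_l}{2}+\tfrac{t_{i_0}-h_l}{2}$. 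Summed over all $a$ exterior factors (each demanding its own spin weight) and all $b$ symmetric factors, the splitting needs $a+\sum v_j$ distinct spin weights, which is the role of the hypothesis $a+\sum v_j\leq 2^{[r/2]-1}$ (respectively $\leq 2^{[r/2]-2}$ in the even~$r$ cases).

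The main obstacle will be the bookkeeping required to arrange all the splittings simultaneously, so that each spin weight $h_l$ is used at most once and every resulting term $\tfrac{t\pm h_l}{2}$ appears as a genuine summand of the tangent-space exponent list $\{q_c\}$ from Section~\ref{subsec: localization}. Once this is organized for $r\equiv 1,\,7\pmod 8$, the remaining congruence classes follow the same pattern, with $h_k$ replaced by $h_k^\pm$ and $E$ replaced by the appropriate combinations of $E_1$, $E_2$, or $\overline E$ dictated by the decomposition~\eqref{eq: complexified tangent space}, and with the hypotheses on $\sum u'_i+\sum v'_j+t$ and $c+\sum v'_i$ playing the roles of their unprimed counterparts for the second factor.
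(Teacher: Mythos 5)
Your proposal is correct and follows exactly the route the paper itself leaves implicit: Theorem~\ref{theo: vanishing 3} is stated without a separate proof precisely because it is obtained by superimposing the arguments of Theorems~\ref{theo: vanishing 1} and~\ref{theo: vanishing 2} in the manner you describe. You correctly identify that the rationality of each $\mu(P,z)$ is unchanged, that the condition $\sum u_i+\sum v_j+s<[\frac{m}{2}]$ (and its analogues) supplies enough distinct $\theta$-indices to reprise the Theorem~\ref{theo: vanishing 1} splitting when the $\theta$-indices in a given weight are all distinct, and that the condition $a+\sum v_j\le 2^{[r/2]-1}$ supplies enough distinct spin weights $h_l$ to reprise the Theorem~\ref{theo: vanishing 2} splitting when the weight concentrates on a single $\theta_{i_0}$ with coefficient as large as $a+\sum v_j$ (each $\ext^{u_i}E$ contributing at most~$1$ and each $\sym^{v_j}E$ up to~$v_j$ to that coefficient). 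One small imprecision: it is not that ``each exterior factor demands its own spin weight'' in general — in Theorem~\ref{theo: vanishing 1} a single $h_k$ serves all $u$ exterior slots — but rather that in the extremal configuration where every exterior factor \emph{also} lands on $\theta_{i_0}$, their combined multiplicity $a$ joins $\sum v_j$ in the count of distinct spin weights needed; your hypothesis bookkeeping is nevertheless exactly right.
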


\begin{Remark}When $r=3$, Theorems \ref{theo: vanishing 1} and \ref{theo: vanishing 2} return the vanishings for almost quaternion-Hermitian manifolds proved in \cite{Herrera-Herrera}.
\end{Remark}

\begin{Remark} Theorems \ref{theo: vanishing 1}, \ref{theo: vanishing 2} and \ref{theo: vanishing 3} do not restrict to the well known vanishings for almost Hermitian manifolds proved in \cite{Hattori}, which require a divisibility condition on $c_1(M)$. This is due to the fact that the structure group of a $2m$-dimensional almost Hermitian manifold is ${\rm U}(m)$ instead of
\begin{gather*}{{\rm U}(m)\times {\rm Spin}(2)\over \{\pm({\rm Id}_2,1),\pm(i{\rm Id}_2,-{\rm vol}_2)\}}.\end{gather*}
\end{Remark}

\begin{Remark}For $r\not = 3,\,4,\,6,\,8$, an almost-${\rm Cl}_r^0$-Hermitian manifold is Spin (see \cite[Theo\-rem~4.1]{Arizmendi-Garcia-Herrera}). Thus, for $u=u_1=u_2=s=t=0$, the vanishings in the theorems restrict to Atiyah--Hirzebruch's vanishing.
\end{Remark}

\subsection*{Acknowledgements}

The f\/irst named author was supported by CONACyT. The second named author was partially supported by a CONACyT grant. The second named author wishes to thank the International Centre for Theoretical Physics and the Institut des Hautes \'Etudes Scientif\/iques for their hospitality and support. We would like to express our gratitude to the anonymous referees for their careful reading of this manuscript and their helpful comments.

\pdfbookmark[1]{References}{ref}
\LastPageEnding

\end{document}